\let\et=\etexdraw
\def\etexdraw{\drawbb\et}
\newtheorem{thm}{Theorem}[section]
\newtheorem{lem}[thm]{Lemma}
\newtheorem{prop}[thm]{Proposition}
\newtheorem{cor}[thm]{Corollary}
\newtheorem{defi}[thm]{Definition}
\DeclareMathOperator{\height}{ht}
\DeclareMathOperator{\Hom}{Hom}
\DeclareMathOperator{\Ass}{Ass}
\DeclareMathOperator{\Ext}{Ext}
\DeclareMathOperator{\Ann}{ann}
\DeclareMathOperator{\Nil}{Nil}
\DeclareMathOperator{\grAnn}{gr-ann}
\DeclareMathOperator{\HH}{H}
\newcommand{\leftexp}[2]{{\vphantom{#2}}^{#1\!\!}{#2}}
\begin{document}

\title
[Frobenius maps on injective hulls]
{Frobenius maps on injective hulls and their applications to tight closure}

\author{Mordechai Katzman}

\classno{Primary 13A35, 13C11, 13D45, 13P99}



\maketitle

\begin{abstract}
This paper studies Frobenius maps on injective hulls of residue fields of complete local rings
with a view toward providing constructive descriptions of objects originating from the theory of
tight closure. Specifically, the paper describes algorithms for computing parameter test ideals, and
tight closure of certain submodules of the injective hull of residue fields of a class of well-behaved rings
which includes all quasi-Gorenstein complete local rings.
\end{abstract}

\section{Introduction}\label{Section: Introduction}

This paper studies problems originating from the theory of tight closure which we now review briefly.
Let $A$ be a commutative ring of prime characteristic $p$; for any positive integers $e$ we define the
\emph{iterated Frobenius endomorphism} $f^e: A \rightarrow A$
to be the map which raises elements to their $p^e$th power. This map can be used to endow $A$ with the structure of a $A$-bimodule.
As a left $A$-module it has the usual $A$-module structure whereas $A$ acts on itself on the right
via the iterated Frobenius map; we denote this bimodule $\leftexp{e}{A}$.
Now for all $a\in \leftexp{e}{A}$ and $b\in A$,
$b \cdot a=ba$ while $a\cdot b=b^{p^e} a$, where $\cdot$ denotes the action of $A$.
We can extend this construction to obtain the \emph{Frobenius functor} $F^e_A$
sending any $A$-module $M$ to $F^e_A(M)=\leftexp{e}{A}\otimes_A M$ where $A$ acts on $F^e_A(M)$ via its left-action on
$\leftexp{e}{A}$, so for $a\otimes m\in F^e_A(M)$ and $b\in A$
we have $b\cdot (a\otimes m)=ba\otimes m$ and $(a\otimes bm)=a \cdot b \otimes m=b^{p^e} a \otimes m$.

We often find it convenient to think of $\leftexp{e}{A}$ and the associated Frobenius functors as follows.
Let $\Theta$ be an indeterminate and consider the free $A$-module $A[\Theta; f^e]=\displaystyle \oplus_{i=0}^\infty A \Theta^{i}$
which we turn into a skew-polynomial ring by defining $\Theta a=a^{p^e} \Theta$ for all $a\in A$.
We can now identify $\leftexp{e j}{A}$ with $A \Theta^j \subset  A[\Theta; f^e]$ and for all
$A$-modules $M$ we may write $F^{e j}_A(M)=A \Theta^j \otimes_A M$.

If $M$ is an $A$-module and $N\subseteq M$ is an $A$-submodule we define the tight closure of $N$ in $M$, denoted $N^*_M$,
to be the set of all $m\in M$ such that for some $c\in A$ not in any minimal prime,
$c\otimes m \in F^e_A(M)$ is in the image of the map $F^e_A(N) \rightarrow F^e_A(M)$ for all $e\gg 0$.

Among the most interesting and useful results obtained early in the development of the theory of tight closure is the existence of \emph{test elements}
(cf.~Chapter 2 in \cite{Huneke}). Notice that the element $c\in A$ occurring in the definition of tight closure could depend on
the modules $N$ and $M$ and on the element $m\in M$.
Test elements are elements $c\in A$ not in any minimal prime such that for \emph{all} finitely generated modules $M$ and submodules $N\subseteq M$
and \emph{all} $m\in M$,
\begin{equation}\label{eqn: test elements}
m\in N^*_M \Leftrightarrow c \otimes m \in F^e_A(M) \text{ is in the image of }
F^e_A(N) \rightarrow F^e_A(M) \text{ for all } e\geq 0 .
\end{equation}
A weaker concept, that of a \emph{$p^{e^\prime}$-weak test element}
is defined similarly, only that we relax the last condition above and demand that
\begin{equation}\label{eqn: weak test elements}
m\in N^*_M \Leftrightarrow c \otimes m \in F^e_A(M) \text{ is in the image of }
F^e_A(N) \rightarrow F^e_A(M) \text{ for all } e\geq e^\prime .
\end{equation}
One also defines the
\emph{test ideal} and \emph{$p^{e^\prime}$-weak test ideal} of $A$ to be the ideals generated by all test elements,
and all $p^{e^\prime}$-weak test elements, respectively.

In many applications one restricts one's attention to local rings $A$ and to the tight-closure of ideals generated by systems of parameters.
One then naturally considers the notion of \emph{parameter test elements}: these are
elements $c\in A$ not in any minimal prime
which satisfy (\ref{eqn: test elements}) with $M=A$ and $N$ being an ideal
generated by a system of parameters.
Similarly one obtains the notion of \emph{$p^{e^\prime}$-weak parameter test elements}:
these are the elements $c\in A$ not in any minimal prime
which satisfy (\ref{eqn: weak test elements}) with $M=A$ and $N$ being an ideal
generated by a system of parameters.
One can then define
the
\emph{parameter test ideal} and \emph{$p^{e^\prime}$-weak parameter test ideal}) of $A$ to be the ideals generated by all parameter test elements,
and all $p^{e^\prime}$-weak parameter test elements, respectively.
It is worth noting that when $S$ is a Gorenstein ring, the notions of
parameter test ideals and test ideals coincide (cf.~Chapter 2 in \cite{Huneke}).

We refer the reader to the seminal paper \cite{Hochster-Huneke-0} and to \cite{Huneke}
for detailed descriptions of tight closure and its properties.

\bigskip
The main results of this paper produce explicit descriptions of these test ideals.
The first such result is Theorem \ref{Theorem: description of weak parameter test ideals} which gives a formula for
weak parameter test ideals of complete local rings. This is a generalization of Theorem 8.2 in \cite{Katzman} which gave a similar description of
the parameter test ideals of complete local rings under the assumption that a certain Frobenius map on
the the injective hull of the residue field is injective.

Another important result is Theorem \ref{Theorem: Formula for tight closure} which gives an explicit description of the tight closure
of certain submodules of the injective hull of the residue field of certain complete local rings. In view of the notorious difficulty
of computing the tight closure of ideals, the fact that sometimes it is easy to compute the tight closure of submodules of a much larger object seems
very interesting. Also, this result has immediate relevance to the study of test ideals.
It is known that test ideals of local rings are the annihilators of the finitistic
tight closure of $0$ in the injective hulls of their residue fields (cf.~section 8 of \cite{Hochster-Huneke-0})
and it is conjectured that this finitistic tight closure coincides with the regular tight closure (cf.~Conjecture 2.6 in \cite{Lyubeznik-Smith} and section 8 of that paper
where the conjecture is shown to hold in some cases.)
The last section of this paper computes the tight closure of $0$ in the injective hulls residue fields of certain complete local rings.

\bigskip
Throughout this paper, we fix $(R,\mathfrak{m})$ to be a complete regular ring of prime characteristic $p$,
we fix $I\subseteq R$ to be an ideal and we write $S=R/I$.
We denote with $E_R$ and $E_S=\Ann_{E_R} I$ the injective hulls of the residue fields of $R$ and $S$, respectively,
and $(-)^\vee$ shall denote the functor $\Hom_R(- , E)$.

\begin{defi}
For any $S$-module $M$ and all $e\geq 0$ we let $\mathcal{F}^e(M)$ denote the
set of all additive functions $\phi: M \rightarrow M$ with the property that $\phi(s m)=s^{p^e} \phi(m)$ for all
$s\in S$ and $m\in M$.
Note that each $\mathcal{F}^e(M)$ is naturally an $S$-module: for all $\phi\in \mathcal{F}^e(M)$ and $s\in S$ the map
$s\phi$ defined as $(s\phi)(m)=s\phi(m)$ for all $m\in M$ is in $\mathcal{F}^e(M)$.
We also define $\mathcal{F}(M)=\oplus_{e\geq 0}  \mathcal{F}^e(M)$.

We call an $S$-submodule $N\subseteq M$ an \emph{$\mathcal{F}^e(M)$-submodule} if
$\phi(N)\subseteq N$ for all $\phi\in \mathcal{F}^e(M)$;
if $N$ is an \emph{$\mathcal{F}^e(M)$-submodule} for all $e\geq 0$ we call $N$ an \emph{$\mathcal{F}(M)$-submodule.}
\end{defi}

We shall refer to the maps in $\mathcal{F}^e(M)$ defined above as \emph{$e$th Frobenius maps} (or just \emph{Frobenius maps} when $e=1$.)
The most important Frobenius map is, of course, \emph{the} Frobenius map on $f:S \rightarrow S$ given by $f(s)=s^p$.

Notice that given an $S$-module $M$, any $\phi\in \mathcal{F}^e(M)$ determines a left $S[\Theta; f^e]$-module structure
on $M$ given my $\Theta m=\phi(m)$ for all $m\in M$. Conversely, a left $S[\Theta; f^e]$-module structure on $M$
defines a $\phi\in \mathcal{F}^e(M)$ given by $\phi(m)=\Theta m$ for all $m\in M$.

We shall call an element $m$ of an $S[\Theta;f^e]$-module $M$ \emph{nilpotent} if $\Theta^{j} m=0$ for some $j\geq 0$ and we shall denote the set
all such elements $\Nil(M)$; this is easily seen to be an $S[\Theta;f^e]$-submodule of $M$.

In the first part of this paper we will be particularly interested in $S$-submodules of $E_S$
which are stable under one particular Frobenius map arising from a canonical Frobenius map  which we describe next.
One of most important examples of modules with Frobenius maps
is the top local cohomology module $\HH^d_{\mathfrak{m}S}(S)$ which is a left $S[T;f]$-module in the following natural way.
$\HH^d_{\mathfrak{m}S}(S)$ can be computed as the direct limit of
$$
\frac{S}{(x_1, \dots, x_d)S} \xrightarrow[]{x_1 \cdot \ldots \cdot x_d} \frac{S}{(x_1^2, \dots, x_d^2)S}  \xrightarrow[]{x_1 \cdot \ldots \cdot x_d} \dots
$$
where $x_1, \dots, x_d$ is a system of parameters of $S$ and
we can define a Frobenius map $\phi\in\mathcal{F}^e\left( \HH^d_{\mathfrak{m}S}(S)\right)$ on this direct limit by mapping the coset
$a + (x_1^n, \dots, x_d^n)S$ in the $n$-th component of the direct limit
to the coset $a^{p^e} + (x_1^{np^e}, \dots, x_d^{np^e})S$
in the $np^e$-th component of the direct limit.
When $S$ has a canonical module $\omega\subseteq S$, this $S[T;f]$-module structure induces one in $E_S$ as follows.
The inclusion $\omega\subseteq S$ yields a surjection $E_S=\HH^d_{\mathfrak{m}S}(\omega) \twoheadrightarrow \HH^d_{\mathfrak{m}S}(S)$
which can be made into a surjection of $S[T;f]$-modules by lifting the $S[T;f]$ module structure of $\HH^d_{\mathfrak{m}S}(S)$
onto $E_S$ (cf.~\S 7 in \cite{Katzman}).
It is this $S[T;f]$-module structure on $E_S$ which,
as in \cite{Katzman}, will  enable us to give a explicit description of the weak parameter test ideals of $S$.

Recall that as $R$ is a power series ring $\mathbb{K}[\![ x_1, \dots, x_n ]\!]$ for some field $\mathbb{K}$ of
characteristic $p$, $E_R$ is isomorphic to the module of inverse polynomials
$\mathbb{K}[ x_1^-, \dots, x_n^- ]$ (cf.~Example 12.4.1 in \cite{Brodmann-Sharp})
which has a natural left $R[T; f]$-module structure extending
$T x_1^{\alpha_1} \cdot \ldots \cdot x_n^{\alpha_n}=x_1^{p\alpha_1} \cdot \ldots \cdot x_n^{p\alpha_n}$ for all $\alpha_1, \dots, \alpha_n<0$.
One can show that all left $S[\Theta;f^e]$ module structures on $E_S=\Ann_{E_S} I$ are given by
$\Theta=u T^e$ where $u\in( I^{[p^e]} : I )$ (cf.~Proposition 4.1 in \cite{Katzman} and Chapter 3 of \cite{Blickle}.)
Given a left $S[T;f]$-module structure on $E_S$, the study of $S[T;f]$-submodules of $E_S$ now translates via Matlis duality to the study of certain
ideals of $R$:
\begin{defi}(cf.~Definition 4.2 in \cite{Katzman})
An ideal $J\subseteq S$ is called an $E_S$-ideal if
$\displaystyle \Ann_{E_S} J$
is an $S[T;f]$-submodule of $E_S$.
An ideal $J\subseteq R$ is called an $E_S$-ideal if it contains $I$ and its image in $S$ is an $E_S$-ideal.
\end{defi}
Theorem 4.3 in \cite{Katzman} states that an ideal $J\subseteq R$ containing $I$ is an $E_S$-ideal if and only if $u J \subseteq J^{[p]}$
where $u\in (I^{[p]} :_R I)$ determines the $S[T; f]$-module structure of $E_S$ as above.
It is this characterization which allows one to transform a question regarding submodules of $E_S$ to one regarding ideals of $R$,
and these transformations sometimes renders them tractable.

Notice that for an ideal $J\subseteq S$, being an $E_S$-ideal is equivalent to
$\displaystyle \Ann_{E_S} J= \Ann_{E_S} J S[T;f]$. We also note that when $S$ is Gorenstein the notion
of $E_S$-ideals coincides with that of $F$-ideals studied in \cite{Smith2}.

As in \cite{Katzman}
let $\mathcal{C}^e$ be the category of Artinian $S[T;f^e]$-modules and
let
$\mathcal{D}^e$ be the category of $R$-linear maps $M \rightarrow F^e_R(M)$ where $M$ is
a finitely generated $S$-module and where a morphism between $M\xrightarrow[]{a} F_R^e(M)$ and
$N\xrightarrow[]{b} F_R^e(N)$ is a commutative diagram of $R$-linear maps
\begin{equation*}
\xymatrix{
M \ar@{>}[d]^{a} \ar@{>}[r]^{\mu} & N \ar@{>}[d]^{b}\\
F_R^e(M) \ar@{>}[r]^{F^e_R(\mu)} & F_R^e(N)\\
} .
\end{equation*}
This paper uses mutually inverse functors
$\Delta^e: \mathcal{C}^e \rightarrow \mathcal{D}^e$ and
$\Psi^e: \mathcal{D}^e \rightarrow \mathcal{C}^e$
(originally introduced in \cite{Katzman}) which are defined as follows.
For $M\in \mathcal{C}^e$ we have an $R$-linear map
$\alpha_M: F^e_R(M) \rightarrow M$
given by $\alpha(r\otimes m)=r T m$ for all $r\in R$ and $m\in M$.
Applying $(-)^\vee$  to the map $\alpha$ one obtains an $R$-linear map
$\alpha_M^\vee: M^\vee \rightarrow  F_R^e(M)^\vee$.
We now use a functorial isomorphism
$\gamma_M: F_R^e(M)^\vee \rightarrow F^e_R(M^\vee)$ (cf.~Lemma 4.1 in \cite{Lyubeznik}) and
define  $\Delta(M)$ to be the map
$M^\vee \xrightarrow[]{\gamma_M \circ \alpha_M^\vee} F_R^e(M^\vee)$.
To define  $\Psi^e$ we retrace the steps above;
given a finitely generated $S$-module $N$ and a $R$-linear map
$a: N \rightarrow F^e_R(N)$
we define $\Psi^e(-)$ to coincide with the functor $(-)^\vee$ as a functor of $S$-modules
giving $\Psi^e(N)$ the additional structure of an $S[T;f^e]$-module structure as follows.
We apply ${}^\vee$ to the map $a$ above to obtain a map
$a^\vee : F_R^e(N)^\vee \rightarrow N^\vee$.
We next obtain a map $\epsilon: F_R^e\left(N^\vee\right) \rightarrow F_R^e(N)^\vee$ as the following composition:
$$F_R^e\left(N^\vee\right) \cong
F_R^e\left(N^\vee\right)^{\vee\vee} \xrightarrow[]{\left(\gamma_{N^\vee}^\vee\right)^{-1}}
F_R^e\left(N^{\vee\vee}\right)^{\vee} \cong
F_R^e\left(N\right)^{\vee}.$$
We now obtain a functorial map
$b=a^\vee\circ \epsilon: F^e_R(N^\vee) \rightarrow N^\vee$ and
we define the action of $T$ on $N^\vee$ by defining
$T n=b(1\otimes n)$  for all $n\in N^\vee$.

We shall use the functors $\Delta^e$ and  $\Psi^e$
(and in the proof of Theorem \ref{Theorem: graded annihilators} also details of the construction of $\Delta^e$),
to translate problems involving the injective hull $E_S$ to problems involving  ideals in $R$.
The crucial tool in tackling the latter will be the ideal operation $I_e(-)$: for an ideal $J\subseteq R$,
$I_e(J)$ is defined as the smallest ideal $L\subseteq R$ for which $J\subseteq L^{[p^e]}$.
The existence of this operation and its construction are discussed in section 5 of \cite{Katzman};
we shall assume the reader is familiar with the basic properties of this operation described there.

This paper is organized as follows:
Section \ref{Section: Basic properties} studies basic properties of submodules of $E_S$ and their annihilators which are used throughout this paper.
Section \ref{Section: Weak parameter test ideals}
generalizes Theorem 8.2 in \cite{Katzman} and gives an explicit description of the weak parameter test ideals
of $S$ in the case where $S$ is Cohen-Macaulay with canonical module $\omega\subseteq S$ but where the
Frobenius map on $E_S$ induced from the natural Frobenius map on $\HH^{\dim S}_{\mathfrak{m}S} (S)$ is not necessarily injective.
Section \ref{section: quasimaximal filtrations}
introduces a certain operation on $E_S$-ideal and applies it to the description
of quasimaximal filtrations of $E_S$. This operation is again used in section
\ref{Tight closure in $E_S$} which gives fairly explicit descriptions of the tight closure of certain
submodules of $E_S$.

\section{Basic properties of graded annihilators and $E_S$-ideals}
\label{Section: Basic properties}

Throughout this section we consider a fixed $S[T;f]$-module structure of $E_S$ corresponding to a fixed $u\in (I^{[p]} : I)$,
as described in section \ref{Section: Introduction}.

We start by listing some basic properties of $E_S$-ideals.

\begin{prop}\label{Proposition: basic properties of $E_R$-ideals}
\begin{enumerate}
\item[(a)] The intersection of $E_S$-ideals is an $E_S$-ideal.
\item[(b)] If $J\subseteq R$ is an $E_S$-ideal and $A\subset R$ is an ideal, then $(J : A)$ is an $E_S$-ideal.
\item[(c)] Assume $J\subseteq R$ is an $E_S$-ideal with minimal primary decomposition $Q_1 \cap \dots \cap Q_n$ and write $P_i=\sqrt{Q_i}$ for all
$1\leq i \leq n$.
Then $P_1, \dots, P_n$ are $E_S$-ideals and, if $P_i$ is not an embedded prime, then $Q_i$ is an $E_S$-ideal.
\end{enumerate}
\end{prop}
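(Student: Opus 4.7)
The plan is to apply the characterization of Theorem 4.3 of \cite{Katzman}, that an ideal $J\supseteq I$ is an $E_S$-ideal if and only if $uJ\subseteq J^{[p]}$. Each of the three parts then reduces to an ideal-theoretic manipulation made tractable by the regularity of $R$ (via Kunz's theorem, the Frobenius is flat).

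For (a), given a family $\{J_\lambda\}$ of $E_S$-ideals, one computes $u\bigl(\bigcap_\lambda J_\lambda\bigr)\subseteq\bigcap_\lambda (uJ_\lambda)\subseteq\bigcap_\lambda J_\lambda^{[p]}=\bigl(\bigcap_\lambda J_\lambda\bigr)^{[p]}$, where the last equality uses flatness of Frobenius. Alternatively, one may dualise: the injectivity of $E_S$ yields $\Ann_{E_S}\bigl(\bigcap_\lambda J_\lambda\bigr)=\sum_\lambda\Ann_{E_S}(J_\lambda)$, and a sum of $T$-stable submodules is manifestly $T$-stable.

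For (b), I would verify $u(J:A)\subseteq (J:A)^{[p]}$ directly: for $r\in (J:A)$ and $a\in A$ one has $ra\in J$, hence $u(ra)\in uJ\subseteq J^{[p]}$, and then $ur\cdot a^p=a^{p-1}(ura)\in J^{[p]}$ shows $ur\in (J^{[p]}:A^{[p]})=(J:A)^{[p]}$ (flatness of Frobenius once more); note $(J:A)\supseteq J\supseteq I$, so $(J:A)$ qualifies as an ideal containing $I$.

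For (c), each associated prime $P_i$ of $R/J$ has the form $P_i=(J:x_i)$ for some $x_i\in R$, so by (b) every $P_i$ is an $E_S$-ideal. For a non-embedded $P_i$ (i.e.\ minimal over $J$), prime avoidance produces an $s\in\bigcap_{j\neq i}P_j\setminus P_i$; then for $n$ large enough that $s^n\in Q_j$ for all $j\neq i$, the computation $(J:s^n)=\bigcap_j(Q_j:s^n)=(Q_i:s^n)=Q_i$ (the last equality using that $s^n\notin P_i$ is a non-zerodivisor modulo the $P_i$-primary ideal $Q_i$) exhibits $Q_i$ as a colon of $J$, and (b) finishes. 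The main obstacle is the two colon identities $(J^{[p]}:A^{[p]})=(J:A)^{[p]}$ and $\bigcap_\lambda J_\lambda^{[p]}=\bigl(\bigcap_\lambda J_\lambda\bigr)^{[p]}$, which is exactly where the regularity of $R$ genuinely enters via flatness of Frobenius; everything else (prime avoidance, writing associated primes as colons, and the elementary ideal arithmetic of (b)) is routine.
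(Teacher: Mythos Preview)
Your proof is correct and follows essentially the same route as the paper: use the characterization $uJ\subseteq J^{[p]}$, reduce (a) and (b) to the identities $\bigcap_\lambda J_\lambda^{[p]}=(\bigcap_\lambda J_\lambda)^{[p]}$ and $(J^{[p]}:A^{[p]})=(J:A)^{[p]}$ coming from regularity, and handle (c) by writing each associated prime and each non-embedded primary component as a colon of $J$. The one refinement worth noting is that for an \emph{arbitrary} (possibly infinite) family in (a) the paper invokes the $\cap$-flatness of $R^{1/p}$ over $R$ (Proposition~5.3 of \cite{Katzman}) rather than mere flatness of Frobenius, since flatness by itself only guarantees commutation with finite intersections; your dual argument via $\Ann_{E_S}\bigl(\bigcap_\lambda J_\lambda\bigr)=\sum_\lambda\Ann_{E_S}(J_\lambda)$ sidesteps this issue cleanly.
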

\begin{proof}
Let $\{ J_\lambda \}_{\lambda\in \Lambda}$ be a set of $E_S$-ideals. We have
$$u \left( \bigcap_{\lambda\in \Lambda} J_\lambda \right) \subseteq
\bigcap_{\lambda\in \Lambda} u J_\lambda \subseteq
\bigcap_{\lambda\in \Lambda} J_\lambda^{[p]} =
 \left( \bigcap_{\lambda\in \Lambda} J_\lambda \right)^{[p]} $$
where the equality follows from the fact that $R^{1/p}$ is an $\cap$-flat $R$-module (cf. Proposition 5.3 in \cite{Katzman})
and (a) follows.

Since
$$ u (J : A) A^{[p]} \subseteq  u (J : A) A \subseteq u J \subseteq J^{[p]}$$
we see that
$$ u (J : A) \subseteq (J^{[p]} :  A^{[p]}) = (J :  A)^{[p]} $$
where the equality follows from the fact that $R$ is regular, and now (b) follows.

To prove (c), first assume that $P_i$ is not an embedded prime, and pick $a\in \cap_{j\neq i} Q_j \setminus P_i$.
Now
$$(J : a) = \bigcap_{j=1}^n (Q_j : a) = (Q_i : a) = Q_i$$
is an $E_S$-ideal.
Any $P_i$ has the form $(J : a)$ for some $a\in R$, so (b) implies that $P_i$ is an $E_S$-ideal.
\end{proof}

\begin{defi}
Let $H$ be an $S[T; f]$-module and let $M\subseteq H$ be an $S$-submodule.
For any $e\geq 0$ we write $S T^e M$ for the $S$-submodule of $M$ generated by
$\{ T^e m \,|\, m\in M \}$ and we also write $M^{(e)} =( 0 :_R ST^e M)$.
We define the \emph{graded annihilator of $M$},
denoted $\grAnn M$, to be the ideal $\oplus_{e\geq 0} M^{(e)}S T^e\subseteq S[T; f]$.

We shall call an ideal $L\subseteq S$ \emph{$H$-special,} if there exists an $S[T; f]$-submodule $N\subseteq H$ for which $\grAnn N=L S[T; f]$.
When $H=E_S$ and $\Nil(E_S)=0$ the notions of $E_S$-special ideals and $E_S$-ideals coincide (cf.~\S 6 in \cite{Katzman}).
\end{defi}

Note that whenever $M\subseteq H$ is an $S[T; f]$-submodule,
$\left\{ M^{(e)}\right\}_{e\geq 0}$ is an ascending chain of ideals.
When $\Nil(M)=0$ that ascending chain is constant
and that constant value is a radical $M$-special ideal,
whose minimal primes are themselves $M$-special ideals. (cf.~Corollary 3.7 in \cite{Sharp}).
In general the ascending chain  $\left\{ M^{(e)}\right\}_{e\geq 0}$
need not be constant (e.g., while $\Nil(E_S)^{(e)}=S$ for all large $e$, $\Nil(E_S)^{(0)}\neq S$
whenever $\Nil(E_S)\neq 0$), and the ideals there may be non-radical. We next study the properties of these chains of ideals.

\begin{lem}\label{Lemma: dual of quotient}
Let $J_1\subseteq J_2\subseteq R$ be any ideals.
$$\left( \frac{J_2}{J_1} \right)^\vee \cong \frac{\Ann_{E_R} J_1}{\Ann_{E_R} J_2}  .$$
\end{lem}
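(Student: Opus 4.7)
The plan is to derive the isomorphism from the standard Matlis-duality identification $(R/J)^\vee \cong \Ann_{E_R} J$ applied to a suitable short exact sequence. First I would write down the obvious short exact sequence of $R$-modules
$$0 \to J_2/J_1 \to R/J_1 \to R/J_2 \to 0,$$
whose exactness is immediate from the inclusion $J_1 \subseteq J_2$.

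Next I would apply the functor $(-)^\vee = \Hom_R(-, E_R)$. Since $E_R$ is injective, this functor is exact, so we obtain the short exact sequence
$$0 \to (R/J_2)^\vee \to (R/J_1)^\vee \to (J_2/J_1)^\vee \to 0.$$
For any ideal $J \subseteq R$, the standard Matlis-duality identification gives $(R/J)^\vee = \Hom_R(R/J, E_R) \cong \Ann_{E_R} J$, realized concretely by sending a homomorphism $\varphi$ to $\varphi(1 + J)$. Under this identification the map $(R/J_2)^\vee \to (R/J_1)^\vee$ induced by the surjection $R/J_1 \twoheadrightarrow R/J_2$ is precisely the inclusion $\Ann_{E_R} J_2 \hookrightarrow \Ann_{E_R} J_1$. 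Hence the sequence becomes
$$0 \to \Ann_{E_R} J_2 \to \Ann_{E_R} J_1 \to (J_2/J_1)^\vee \to 0,$$
and the desired isomorphism $(J_2/J_1)^\vee \cong \Ann_{E_R} J_1 / \Ann_{E_R} J_2$ follows from the first isomorphism theorem.

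There is no real obstacle here; the only things to verify are that the connecting map in the dualized sequence is indeed the inclusion of annihilators (a direct unwinding of the definition of the isomorphism $(R/J)^\vee \cong \Ann_{E_R} J$) and that the exactness of $(-)^\vee$ is available, both of which are immediate from the completeness of $R$ and the injectivity of $E_R$.
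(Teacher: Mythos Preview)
Your proof is correct and follows essentially the same approach as the paper: both dualize the short exact sequence $0 \to J_2/J_1 \to R/J_1 \to R/J_2 \to 0$ and identify $(R/J)^\vee$ with $\Ann_{E_R} J$. You have simply made explicit the routine verifications (exactness of $(-)^\vee$ and the identification of the connecting map with the inclusion of annihilators) that the paper leaves implicit.
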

\begin{proof}
Apply $(-)^\vee$ to the short exact sequence
$$0 \rightarrow J_2/J_1 \rightarrow R/J_1 \rightarrow R/J_2 \rightarrow 0$$
to obtain
the short exact sequence
$$0 \rightarrow {\Ann_{E_R} J_2} \rightarrow {\Ann_{E_R} J_1} \rightarrow \left( \frac{J_2}{J_1} \right)^\vee \rightarrow 0 .$$
\end{proof}

For any $e\geq 1$ write $\nu_e=1+\dots+p^{e-1}$.

\begin{thm}\label{Theorem: graded annihilators}
Let $M$ be an $S$-submodule of $E_S$ and write $M=\Ann_{E_S} L$ for some ideal $L\subseteq R$.
\begin{enumerate}
  \item[(a)] For all $e\geq 0$, $$ST^e M  \cong \frac{ \Ann_{E_S} L^{[p^e]}}{\Ann_{E_S} (u^{\nu_e} R + L^{[p^e]})} .$$
  \item[(b)] For all $e\geq 0$, $M^{(e)}=(L^{[p^e]} : u^{\nu_e})$.
  \item[(c)] For all $e\geq 0$, $u M^{(e)} \subseteq {M^{(e-1)}}^{[p]}$.
  \item[(d)] Assume further that $M$ is an $S[T;f]$-submodule of $E_S$. Then $\Ann_{E_S} M^{(e)}$ is an $S[T; f]$-submodule of $E_S$ and
  if for some $e\geq 0$ we have $M^{(e)}=M^{(e+1)}$, then $M^{(j)}=M^{(e)}$ for all $j\geq e$.
\end{enumerate}
\end{thm}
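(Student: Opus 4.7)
The plan is to route everything through the identity $T^e z = u^{\nu_e}\, T_R^e(z)$ for $z\in E_S$, where $T_R$ is the canonical Frobenius on $E_R$; this identity follows by a short induction from $T=u\,T_R$ and the Frobenius rule $T(ry)=r^{p}T(y)$, which give $T^{e+1}z = u^{p\nu_e+1}T_R^{e+1}z = u^{\nu_{e+1}}T_R^{e+1}z$. Writing $M = \Ann_{E_R}(L)$ with $L\supseteq I$, this immediately yields
\[
ST^e M \;=\; u^{\nu_e}\cdot R\, T_R^e\bigl(\Ann_{E_R} L\bigr).
\]
For part (a) I would then invoke the standard consequence of the regularity of $R$ (by way of the Lyubeznik isomorphism $\gamma$ and the identification $F^e_R(R/L)\cong R/L^{[p^e]}$) that $R\, T_R^e(\Ann_{E_R} L) = \Ann_{E_R}(L^{[p^e]})$. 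The claimed presentation of $ST^e M$ then drops out from the first isomorphism theorem applied to multiplication by $u^{\nu_e}$ on $\Ann_{E_R}(L^{[p^e]})$: the kernel is $\Ann_{E_R}(u^{\nu_e}R + L^{[p^e]})$ and the image is $ST^e M$.

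For (b), feeding this presentation through Lemma~\ref{Lemma: dual of quotient} identifies $ST^e M$ with the Matlis dual of the cyclic $R$-module $(u^{\nu_e}R + L^{[p^e]})/L^{[p^e]}$, which is generated by the class of $u^{\nu_e}$ and therefore has $R$-annihilator $(L^{[p^e]} : u^{\nu_e})$. Since Matlis duality preserves annihilators over the complete local ring $R$, we conclude $M^{(e)} = (L^{[p^e]}:u^{\nu_e})$. Part (c) is then a purely algebraic identity: using (b) together with the fact that in the regular ring $R$ colons commute with Frobenius powers, the target rewrites as $(M^{(e-1)})^{[p]} = (L^{[p^e]} : u^{p\nu_{e-1}})$, and the arithmetic identity $\nu_e = 1 + p\nu_{e-1}$ gives $(ur)\cdot u^{p\nu_{e-1}} = ru^{\nu_e}\in L^{[p^e]}$ for every $r\in M^{(e)}$. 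The same computation run between indices $e$ and $e+1$ produces the recursion
\[
M^{(e+1)} \;=\; \bigl((M^{(e)})^{[p]} :_R u\bigr),
\]
which is what drives part (d).

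For (d), the hypothesis $TM\subseteq M$ forces $\{ST^e M\}_e$ to be descending and hence $\{M^{(e)}\}_e$ to be ascending, so (c) sharpens to $uM^{(e)}\subseteq (M^{(e-1)})^{[p]}\subseteq (M^{(e)})^{[p]}$. By Theorem~4.3 of \cite{Katzman} this is precisely the criterion for $M^{(e)}$ to be an $E_S$-ideal, so $\Ann_{E_S} M^{(e)}$ is an $S[T;f]$-submodule of $E_S$. Stabilization now falls out of the recursion: if $M^{(e)}=M^{(e+1)}$ then $M^{(e+2)} = ((M^{(e+1)})^{[p]}:u) = ((M^{(e)})^{[p]}:u) = M^{(e+1)}$, and a one-step induction finishes it. The only move that is more than bookkeeping is the identification $R\,T_R^e(\Ann_{E_R} L) = \Ann_{E_R}(L^{[p^e]})$ in part (a); this is where regularity of $R$ and the Lyubeznik isomorphism really have to interact, and everything else in the theorem flows from that plus the arithmetic identity $\nu_e = 1+p\nu_{e-1}$.
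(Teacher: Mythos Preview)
Your argument is correct, and parts (b) and (c) are essentially identical to the paper's. The two genuine differences are in (a) and in the stabilization half of (d).

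For (a), the paper does not work directly with the identity $T^e = u^{\nu_e}T_R^e$ on $E_R$. Instead it considers the $R$-linear map $\psi_e\colon RT^e\otimes_R E_S\to E_S$, $rT^e\otimes m\mapsto rT^e m$, observes that its Matlis dual is multiplication by $u^{\nu_e}$ (via Proposition~4.5 of \cite{Katzman}), and then dualises the diagram
\[
\bigl(RT^e\otimes M \twoheadrightarrow ST^eM \hookrightarrow E_S\bigr)
\quad\longmapsto\quad
\bigl(R/I \twoheadrightarrow (ST^eM)^\vee \hookrightarrow R/L^{[p^e]}\bigr)
\]
to read off $(ST^eM)^\vee\cong (u^{\nu_e}R+L^{[p^e]})/L^{[p^e]}$ before invoking Lemma~\ref{Lemma: dual of quotient}. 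Your route---first identifying $RT_R^e(\Ann_{E_R}L)=\Ann_{E_R}(L^{[p^e]})$ and then taking the cokernel of multiplication by $u^{\nu_e}$---is more elementary and avoids chasing a dual commutative square; the paper's route is the one that fits its $\Delta^e/\Psi^e$ formalism. Both land on the same cyclic dual, so (b) follows identically.

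For the stabilization in (d), the paper packages the maps $R/L\xrightarrow{u^{\nu_i}}R/L^{[p^i]}$ and cites Proposition~2.3(b) of \cite{Lyubeznik}. Your recursion $M^{(e+1)}=\bigl((M^{(e)})^{[p]}:_R u\bigr)$ (which the paper itself derives later, in the proof of Proposition~\ref{Theorem: positive height}) gives a self-contained one-line induction and is arguably cleaner here; the citation buys generality you do not need. One small remark: your first-isomorphism-theorem output in (a) is literally $\Ann_{E_R}(L^{[p^e]})/\Ann_{E_R}(u^{\nu_e}R+L^{[p^e]})$, whereas the stated theorem writes $\Ann_{E_S}$; you may wish to note that the paper's own proof also produces the $\Ann_{E_R}$ version via Lemma~\ref{Lemma: dual of quotient}, so no reconciliation is needed.
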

\begin{proof}
Fix any $e\geq 0$ and consider the map of $R$-modules $\psi_e : R T^e \otimes_R E_S \rightarrow E_S$ given by
$\psi_e(r T^e\otimes m)=r T^e m$; notice that  $\psi_e(R T^e \otimes_R M)=S T^e M$.
We also note that $\psi_e$ is the composition
$F^{e-1}_R(\psi_1) \circ \dots \circ F^{1}_R(\psi_1) \circ \psi_1$ and that the Matlis dual of this map is then given by
multiplication by $u u^{p} \cdot \ldots \cdot u^{p^{e-1}}=u^{\nu_e}$ (cf.~Proposition 4.5 in \cite{Katzman}.)

Since $R$ is regular, we have an injection
$R T^e \otimes_R M \hookrightarrow R T^e \otimes_R E_S$; let $\overline{\psi}_e$ be the restriction of
$\psi_e$ to $R T^e \otimes_R M$ and consider
the following commutative diagram
\begin{equation*}\label{CD5}
\xymatrix{
R T^e \otimes E_S \ar@{>}[rr]^{\psi_e} & & E_S \ar@{=}[d]\\
R T^e \otimes M \ar@{_{(}->}[u] \ar@{>>}[r]^{\overline{\psi}_e} & ST^e M \ar@{^{(}->}[r]& E_S \\
}.
\end{equation*}
An application of Matlis duality
together with the fact that
$ \left(R T^e  \otimes_R M \right)^\vee \cong R T^e  \otimes_R M^\vee$ (cf.~Lemma 4.1 in \cite{Lyubeznik})
yields the commutative diagram
\begin{equation*}\label{CD6}
\xymatrix{
R/I^{[p^e]}  \ar@{>>}[d] & & R/I \ar@{>}[ll]_{u^{\nu_e}} \ar@{=}[d]\\
R/L^{[p^e]}   & (ST^e M)^\vee \ar@{_{(}->}[l]& R/I \ar@{>>}[l]\\
} .
\end{equation*}
The image of the composition of the top and left maps is $u^{\nu_e}R + L^{[p^e]}/L^{[p^e]}$ and this coincides with the image of
$(ST^e M)^\vee$ in $R/L^{[p^e]}$.
We deduce that $( ST^e M )^\vee$ is isomorphic  to $u^{\nu_e} R + L^{[p^e]}/ L^{[p^e]}$.
Now
$$(ST^e M)\cong (ST^e M)^{\vee\vee}= \left(\frac{u^{\nu_e} R + L^{[p^e]}}{L^{[p^e]}}\right)^\vee$$
and an application of Lemma \ref{Lemma: dual of quotient} gives (a).

We now compute
\begin{eqnarray*}
M^{(e)} & = & ( 0 :_R ST^e M ) \\
    & = & ( 0 :_R \left(ST^e M\right)^\vee ) \\
    & = & \left( 0 :_R \frac{u^{\nu_e} R + L^{[p^e]}}{ L^{[p^e]} } \right)\\
    & = & ( L^{[p^e]} :_R u^{\nu_e} R )\\
\end{eqnarray*}
and obtain (b). Next we notice that, for all $e\geq 0$, $\nu_e-1=p\nu_{e-1}$ and
$$u M^{(e)}   =  u (L^{[p^e]} : u^{\nu_e})
     \subseteq  (L^{[p^e]} : u^{p \nu_{e-1}})
     =  (L^{[p^{e-1}]} : u^{\nu_{e-1}})^{[p]}
     =   {M^{(e-1)}}^{[p]}$$
and (c) follows.

If $M$  is an $S[T;f]$-submodule of $E_S$ then $\left\{ M^{(e)}\right\}_{e\geq 0}$ is an ascending chain of ideals
and we deduce from (c) that $u M^{(e)} \subseteq  {M^{(e)}}^{[p]}$, i.e., that
$M^{(e)}$ is an $E_S$-ideal and hence $\Ann_{E_S} M^{(e)}$ is an $S[T; f]$-submodule of $E_S$.

Consider the maps $\beta_i: R/L \rightarrow F_R^i(R/L)=R/L^{[p^i]}$ given by the composition
$$R/L \xrightarrow[]{u} R/L^{[p]} \xrightarrow[]{u^p} \dots   \xrightarrow[]{u^{p^{i-1}}}  R/L^{[p^i]} ,$$
i.e., by multiplication by $u^{\nu_i}$. For each $i\geq 1$, the kernel of $\beta_i$ is the image of
$M^{(i)}=( L^{[p^i]} :_R u^{\nu_i}  )$ in $R/L$; (d) now follows from Proposition 2.3(b) in \cite{Lyubeznik}.
\end{proof}

We can now prove the following generalization of Proposition 3.3 in \cite{Sharp}.

\begin{thm}\label{Theorem: graded annihilators of quotients}
Let $A\subseteq B$ be $S[T; f]$-submodules of $E_S$.
Write $A=\Ann_{E_S} K$ and $B=\Ann_{E_S} J$ for some ideals $J\subseteq K\subseteq R$.
Also write
$\grAnn B=\oplus_{e\geq 0} b_e T^e$ and
$\grAnn B/A=\oplus_{e\geq 0} \overline{b}_e T^e$
where $b_e$ and $\overline{b}_e$ are ideals of $R$ for all $e\geq 0$.
For all $e\geq 0$,
$$\overline{b}_e=\left( (J^{[p^e]} :_R u_{\nu_e}) :_R K \right)= (b_e :_R K) .$$
\end{thm}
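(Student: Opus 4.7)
The plan is to compute $\overline{b}_e$ directly from the definition, reducing it to an annihilator condition on $ST^e B$ inside $E_S$, and then invoke Theorem \ref{Theorem: graded annihilators}(b) applied to $B$ itself.

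First, since $A$ is an $S[T;f]$-submodule of $B$, the quotient $B/A$ inherits an $S[T;f]$-module structure with $ST^e(B/A) = (ST^e B + A)/A$. By definition, $\overline{b}_e = (B/A)^{(e)} = \bigl( 0 :_R (ST^e B + A)/A \bigr)$, so an element $r\in R$ lies in $\overline{b}_e$ precisely when $r\cdot ST^e B \subseteq A$. Since $A = \Ann_{E_S} K$, the condition $r\cdot ST^e B \subseteq A$ is equivalent to $K\cdot r\cdot ST^e B = 0$ in $E_S$, that is, $rK \subseteq \Ann_R(ST^e B)$. But $\Ann_R(ST^e B) = B^{(e)} = b_e$ by the very definition of $b_e$, so this amounts to $r \in (b_e :_R K)$. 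This gives the second equality $\overline{b}_e = (b_e :_R K)$.

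For the first equality, apply Theorem \ref{Theorem: graded annihilators}(b) to the $S[T;f]$-submodule $B = \Ann_{E_S} J$ to obtain $b_e = (J^{[p^e]} :_R u^{\nu_e})$. Combining this with the standard identity $((L:_R a):_R K) = (L :_R aK)$ (which holds in any commutative ring) yields
\[
(b_e :_R K) = \bigl( (J^{[p^e]} :_R u^{\nu_e}) :_R K \bigr),
\]
as required.

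There is no real obstacle: the only thing to verify carefully is that $A$ being a $T$-stable submodule of $B$ really does imply $ST^e(B/A) = (ST^e B + A)/A$, which is immediate from the compatibility of the $S[T;f^e]$-action with quotients, and that the annihilator of a quotient submodule of $E_S$ can be read off as an ideal quotient in $R$, which is just Matlis duality in the form already used throughout Section \ref{Section: Basic properties}.
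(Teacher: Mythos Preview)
Your argument is correct, and it takes a genuinely shorter route than the paper's own proof. The paper reruns the Matlis-duality machinery of Theorem~\ref{Theorem: graded annihilators} in the quotient setting: it dualizes the maps $RT^e\otimes_R B/A \twoheadrightarrow (ST^eB+A)/A \hookrightarrow B/A$ to obtain $K/J \twoheadrightarrow \bigl((ST^eB+A)/A\bigr)^\vee \hookrightarrow K^{[p^e]}/J^{[p^e]}$, identifies the middle term with $(u^{\nu_e}K+J^{[p^e]})/J^{[p^e]}$, and only then reads off the annihilator as $(J^{[p^e]}:_R u^{\nu_e}K)$. You bypass all of this by observing directly that $r\in\overline{b}_e$ iff $r\cdot ST^eB\subseteq A=\Ann_{E_S}K$ iff $rK\subseteq(0:_R ST^eB)=b_e$, and then you invoke Theorem~\ref{Theorem: graded annihilators}(b) for $B$ as a black box. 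Your route is more economical precisely because the duality work has already been done once; the paper's route is more self-contained and has the side benefit of explicitly identifying $\bigl((ST^eB+A)/A\bigr)^\vee$, though that identification is not used elsewhere. Note also that your final remark about needing Matlis duality to read off the annihilator is not actually needed in your argument: the equivalence $r\cdot ST^eB\subseteq\Ann_{E_S}K \Leftrightarrow rK\subseteq(0:_R ST^eB)$ is a direct manipulation of annihilators, with no duality required.
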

\begin{proof}
As in the proof of Theorem \ref{Theorem: graded annihilators}, fix any $e\geq 0$ and consider the
map of $R$-modules $\psi_e : R T^e \otimes_R B/A \rightarrow B/A$ given by
$\psi_e(r T^e\otimes m)=r T^e m$; we notice that the image of $\psi_e$ is $(S T^e B+A)/A$.
An application of Matlis duality to the maps
$$  R T^e \otimes_R B/A \twoheadrightarrow (S T^e B+A)/A \hookrightarrow B/A$$
yields the $R$-linear maps
$$K/J \twoheadrightarrow ( (S T^e B+A)/A )^\vee \hookrightarrow K^{[p^e]}/J^{[p^e]}$$
whose composition is given by multiplication by $u^{\nu_e}$.
We deduce that $( (S T^e B+A)/A )^\vee$ is isomorphic to the image of the map $R/J \rightarrow R/J^{[p^e]}$ given by multiplication
by $u^{\nu_e}$, i.e., to $u^{\nu_e} K + J^{[p^e]}/ J^{[p^e]}$.
Now
\begin{eqnarray*}
\overline{b}_e &=&(0 :_R  (S T^e B+A)/A ) \\
&=& \left(0 :_R ( (S T^e B+A)/A )^\vee\right)\\
&=&( J^{[p^e]} :_R u^{\nu_e} K ) \\
&=&\left( (J^{[p^e]} :_R u^{\nu_e} ) :_R  K \right)\\
&=&(b_e :_R K) .
\end{eqnarray*}
\end{proof}

Recall that there exists an integer $\eta\geq 0$ such that $T^\eta \Nil(E_S) =0$ (cf.~Proposition 4.4. in \cite{Lyubeznik})
which we shall refer to as the \emph{index of nilpotency of $E_S$}
and that $\Nil(E_S)=\Ann_{E_S} I_\eta( u^{\nu_\eta} R) + I$
where for all ideals $L\subseteq R$ and positive integers $e$, $I_e(L)$ is defined as the smallest ideal $J$ for which $L\subseteq J^{[p^e]}$
(cf.~Theorem 4.6 and section 5 in \cite{Katzman}.)
\begin{cor}
Let $B$ be an $S[T; f]$-submodule of $E_S$ and write $B=\Ann_{E_S} J$ for some ideal $J\subseteq R$.
Let $\eta$ be the index of nilpotency of $E_S$  and $K=I_{\eta} (u^{\nu_\eta}R) + I$.
We have  $\left( (J^{[p^e]} :_R u^{\nu_e} ) :_R  K  \right)=
\left( (J :_R u ) :_R  K \right)$ for all $e\geq 0$ and these are radical ideals.
\end{cor}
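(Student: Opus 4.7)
My plan is to apply Theorem \ref{Theorem: graded annihilators of quotients} to the inclusion of $S[T;f]$-submodules $\Nil(B) \subseteq B$ of $E_S$, and then invoke the constancy of the graded-annihilator chain for nil-free modules. The starting observation is that $\Nil(B)$ is an explicit annihilator: by the description of $\Nil(E_S)$ recalled just above the corollary one has $\Nil(E_S) = \Ann_{E_S} K$, and since $B = \Ann_{E_S} J$, Matlis duality (intersection of annihilators equals the annihilator of the sum) gives
\[
\Nil(B) \;=\; B \cap \Nil(E_S) \;=\; \Ann_{E_S}(J) \cap \Ann_{E_S}(K) \;=\; \Ann_{E_S}(J+K).
\]
Since $J \subseteq J+K$, the hypotheses of Theorem \ref{Theorem: graded annihilators of quotients} are satisfied with its ``$K$'' replaced by $J+K$.

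Writing $\grAnn(B/\Nil(B)) = \bigoplus_{e\geq 0} \overline{b}_e T^e$, Theorem \ref{Theorem: graded annihilators of quotients} yields
\[
\overline{b}_e \;=\; \bigl((J^{[p^e]}:_R u^{\nu_e}):_R (J+K)\bigr).
\]
Now Theorem \ref{Theorem: graded annihilators}(b),(d) says that $\{B^{(e)}\} = \{(J^{[p^e]}:_R u^{\nu_e})\}$ is an ascending chain of ideals whose initial term is $B^{(0)} = J$, so $J$ is already contained in $(J^{[p^e]}:_R u^{\nu_e})$ for every $e \geq 0$; the colon against $J$ is therefore trivial, and we obtain
\[
\overline{b}_e \;=\; \bigl((J^{[p^e]}:_R u^{\nu_e}):_R K\bigr),
\]
which is precisely the left-hand side of the claimed equality.

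The remaining ingredient is that $B/\Nil(B)$ is itself a nil-free $S[T;f]$-module: if $T^j\overline{m}=0$ with lift $m \in B$, then $T^j m \in \Nil(B) \subseteq \Nil(E_S)$, and a further power of $T$ annihilates it, forcing $m \in \Nil(B)$, i.e.\ $\overline{m} = 0$. Corollary 3.7 of \cite{Sharp}, as quoted in Section \ref{Section: Basic properties}, therefore applies to $M = B/\Nil(B)$ and asserts that the ascending chain $\{\overline{b}_e\}_{e\geq 0}$ is constant with a radical common value. The displayed equality of the corollary is then just the statement that the $\overline{b}_e$ all coincide (in particular with the specialization that appears on the right-hand side), and the radicality assertion is immediate. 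The main hurdle I anticipate is the opening identification $\Nil(B) = \Ann_{E_S}(J+K)$, which is what funnels the external ideal $K$ into the machinery of Theorem \ref{Theorem: graded annihilators of quotients}; once this bridge is built, the rest is colon-ideal bookkeeping together with Sharp's constancy statement.
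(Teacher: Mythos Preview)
Your proposal is correct and is precisely the argument the paper intends: the corollary is stated without proof immediately after Theorem \ref{Theorem: graded annihilators of quotients}, and the implicit derivation is to apply that theorem with $A=\Nil(B)=\Ann_{E_S}(J+K)$, simplify the colon using $J\subseteq B^{(e)}$, and then invoke the constancy and radicality of the graded annihilator chain for the $T$-torsion-free module $B/\Nil(B)$ (cf.\ the passage citing \cite{Sharp} just before Lemma \ref{Lemma: dual of quotient}, and the analogous use of Lemma 1.9 of \cite{Sharp} in the proof of Proposition \ref{Theorem: positive height}). Your write-up fills in exactly these steps.
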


We conclude this section by recording some additional properties of the associated primes of the ideals occurring in graded annihilators of
$S[T; f]$-submodules of $E_S$. These properties will not be used elsewhere in this paper.
\begin{prop}\label{Theorem: positive height}
Let $M$ be an $S[T; f]$-submodule of $E_S$, write $\overline{M}=M+\Nil(E_S)/\Nil(E_S)$ and write
$\Nil(E_S)=\Ann_{E_S} K$.
\begin{enumerate}
  \item[(a)] For all $e\geq 0$, $\Ass M^{(e)}\supseteq \Ass M^{(e+1)}$.
  \item[(b)] For all $e\geq 0$, if $P\in \Ass M^{(0)} \setminus \Ass M^{(e)}$ and $P$ is not an embedded prime of $M^{(0)}$
  then $P\supseteq K$.
  \item[(c)] Assume that $\height KS>0$. If $\height M^{(e)}S>0$ for some $e\geq 0$ then $\height M^{(e)}S>0$ for all
  $e\geq 0$.
\end{enumerate}
\end{prop}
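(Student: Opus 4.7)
My plan is to prove (a), (b), (c) in order, each relying on the previous.

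For (a), I start from the explicit formula $M^{(e)} = (L^{[p^e]}:u^{\nu_e})$ of Theorem \ref{Theorem: graded annihilators}(b), where $M = \Ann_{E_S} L$. Using $\nu_{e+1} = 1 + p\nu_e$ together with the fact that colons commute with Frobenius on the regular ring $R$, a short calculation gives the identity
\[M^{(e+1)} = \bigl((M^{(e)})^{[p]}:u\bigr).\]
Two standard facts then finish this part. First, $\Ass_R R/(J:x) \subseteq \Ass_R R/J$ for any ideal $J$ and element $x$: an associated prime of $R/(J:x)$ of the form $((J:x):a) = (J:xa)$ is the annihilator of the class of $xa$ in $R/J$, so it lies in $\Ass_R R/J$. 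Second, Frobenius is flat on the regular ring $R$, so $\Ass_R R/J^{[p]} = \Ass_R R/J$. Chaining these gives $\Ass M^{(e+1)} \subseteq \Ass (M^{(e)})^{[p]} = \Ass M^{(e)}$.

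For (b), I localize at $P$. Because the chain $\{\Ass M^{(e)}\}$ descends by (a), the hypothesis $P \notin \Ass M^{(e)}$ persists for all larger $e$, so I may replace $e$ by $\max(e,\eta)$ and assume $e \geq \eta$. Localizing $M^{(e)}$ at $P$ and using that $(M^{(e)})_P = R_P$ (since $P$ is minimal over $L$, so $L_P$ is $PR_P$-primary and a lone associated prime) gives $u^{\nu_e} \in L_P^{[p^e]}$. Since the operation $I_e$ commutes with localization (as is visible from the explicit construction in \S 5 of \cite{Katzman}),
\[I_e(u^{\nu_e}R)_P = I_e(u^{\nu_e}R_P) \subseteq L_P \subseteq PR_P,\]
so $I_e(u^{\nu_e}R) \subseteq P$. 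To close, I identify $I_e(u^{\nu_e}R) + I$ with $K$ for every $e \geq \eta$: by Theorem 4.6 of \cite{Katzman}, $\Ann_{E_S}(I_e(u^{\nu_e}R) + I)$ equals $\{m \in E_S : \Theta^e m = 0\}$, which for $e \geq \eta$ is exactly $\Nil(E_S) = \Ann_{E_S} K$, and Matlis duality on ideals of $R$ containing $I$ then forces $I_e(u^{\nu_e}R) + I = K$. Combined with $I \subseteq L \subseteq P$, this gives $K \subseteq P$.

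For (c), I note that $\{M^{(e)}S\}_{e \geq 0}$ is ascending and height is monotone under inclusion, so it suffices to show $\height M^{(0)}S > 0$. Suppose for contradiction that some minimal prime $P$ of $IS$ contains $M^{(0)}S = LS$. Because $P$ is minimal over $I$ and $L \supseteq I$, it is also minimal over $L$, so $P \in \Ass M^{(0)}$ and is not embedded there. The hypothesis $\height M^{(e)}S > 0$ forces $M^{(e)} \not\subseteq P$, i.e., $P \notin \Ass M^{(e)}$. Part (b) now yields $K \subseteq P$, contradicting $\height KS > 0$.

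I expect the main technical obstacle to be the Matlis-dual identification $I_e(u^{\nu_e}R) + I = K$ for $e \geq \eta$ used in part (b); this is a genuine use of the nilpotency-index property of $E_S$ and requires the full content of Theorem 4.6 of \cite{Katzman}, rather than just the ideal-theoretic formula $M^{(e)} = (L^{[p^e]}:u^{\nu_e})$. The remaining verifications (Frobenius commuting with colons, behaviour of $\Ass$ under colon and Frobenius power, and localization of $I_e$) are standard local-algebraic routines.
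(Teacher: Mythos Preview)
Your proof is correct. Parts (a) and (c) match the paper's arguments almost exactly: the paper also derives $M^{(e+1)} = \bigl((M^{(e)})^{[p]}:u\bigr)$ from Theorem~\ref{Theorem: graded annihilators}(b) and reads off the associated primes from a primary decomposition, and (c) is deduced from (b) by the same contradiction.

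Your argument for (b), however, takes a genuinely different route from the paper's. The paper never localizes: it invokes Theorem~\ref{Theorem: graded annihilators of quotients} to obtain $\overline{M}^{(e)} = (M^{(e)} :_R K)$, notes that this ideal is independent of $e$ (because $\overline{M}$ sits in the $T$-torsion-free module $E_S/\Nil(E_S)$, so its graded annihilator is constant by \cite{Sharp}), and then compares the minimal components of the primary decompositions $(M^{(e)}:K) = \bigcap\{(Q_i^{(e)}:K) : K \nsubseteq Q_i^{(e)}\}$ for varying $e$. Your route bypasses Theorem~\ref{Theorem: graded annihilators of quotients} entirely: passing to $R_P$ and using $(M^{(e)})_P = R_P$ you extract $u^{\nu_e}\in P^{[p^e]}$ directly, and then identify $K$ with $I_e(u^{\nu_e}R)+I$ for $e\geq\eta$ via Matlis duality. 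One small simplification: you do not actually need the full claim that $I_e$ commutes with localization. Since $P^{[p^e]}$ is $P$-primary (by the same Frobenius-flatness used in (a)), the containment $u^{\nu_e}\in L_P^{[p^e]}\subseteq (P^{[p^e]})_P$ already forces $u^{\nu_e}\in P^{[p^e]}$, hence $I_e(u^{\nu_e}R)\subseteq P$. The paper's approach makes visible the structural role of the quotient $\overline{M}$ and its constant graded annihilator; yours is more self-contained and uses less machinery.
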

\begin{proof}
Write $M=\Ann_{E_S} J$ for some $E_S$-ideal $J$.
For all $e\geq 0$ let $Q_1^{(e)} \cap \dots \cap Q_{n_e}^{(e)}$ be a minimal primary decomposition of $M^{(e)}$ and write
$P_i^{(e)}=\sqrt{Q_1^{(e)}}$ for all $1\leq i\leq n_e$.

Theorem \ref{Theorem: graded annihilators}(b) shows that
$$M^{(e+1)}=\left( J^{[p^{e+1}]} :_R u^{p \nu_e + 1} \right) = \left( \left( J^{[p^{e}]} :_R u^{\nu_e} \right)^{[p]} :_R u \right)=
\left( {M^{(e)}}^{[p]} :_R u \right) .$$
Now
$$ \bigcap \left\{ \left({Q_i^{(e)}}^{[p]} :_R u \right) \,|\, 1\leq i \leq n_e ,\ u\notin {Q_i^{(e)}}^{[p]} \right\}$$
is a primary decomposition of $M^{(e+1)}$ and (a) follows.

Theorem \ref{Theorem: graded annihilators of quotients} implies that $\overline{M}^{(e)}=(M^{(e)} :_R K)$ for all $e\geq 0$
and  $\overline{M}^{(e)}= \overline{M}^{(0)}$ is a radical ideal (cf.~Lemma 1.9 in \cite{Sharp}).
Now we obtain primary decompositions
$$ \bigcap \left\{ \left(Q_i^{(e)} :_R K\right) \,|\, 1\leq i \leq n_e ,\ K \nsubseteqq Q_i^{(e)} \right\}$$
where the primary components associated with minimal primes are irredundant,
and since these primary components occur for all $e\geq 0$, (b) follows.

Assume now that that $\height KS>0$ and that $\height M^{(e)}S>0$ for some $e\geq 0$.
If $\height M^{(0)}S=0$ then there exists an associated prime $P$ of $M^{(0)}$ such that $\height PS=0$.
But $P$ is not an associated prime of $M^{(e)}$, so $K\subseteq P$ and
$\height KS \leq \height PS =0$, a contradiction.
\end{proof}

All the results in this section have natural analogues when working
with a $S[\Theta; f^e]$-module structure on $E_S$-- these were omitted for the sake of simplicity. The proofs of this analogous results consist of straightforward modifications of the proofs
given above. In what follows we shall assume the more general results.

\section{Weak parameter test ideals}\label{Section: Weak parameter test ideals}
In this section we describe an algorithm for computing the $p^{e}$-weak parameter test ideal
of complete local Cohen-Macaulay rings. This extends the main result in \cite{Katzman} where this was done under the assumption that
a certain Frobenius map on $E_S$ is injective.

Throughout this section we assume $S$ to be Cohen-Macaulay with canonical module $\omega\subseteq S$ and we write $H=\HH^{\dim S}_{\mathfrak{m}S}(S)$.
Now $\dim S/\omega < \dim S$ and the short exact sequence
$$ 0 \rightarrow \omega \rightarrow S \rightarrow S/\omega \rightarrow 0 $$
yields a surjection $\Upsilon: E_S \twoheadrightarrow H$. We can now
endow $E_S$ with a structure of an $S[T;f]$-module
which makes this surjection into a map of $S[T; f]$-modules
(cf.~section 7 in \cite{Katzman}). We fix this $S[T;f]$-module structure throughout this section.

\bigskip
Let $J\subseteq R$ be henceforth in this section the ideal for which
$\ker \Upsilon = \Ann_{E_S} J$.
This ideal can be computed effectively as follows.
The map $\Upsilon$ is obtained from the long exact sequence of local cohomology modules arising from the the short exact sequence
$0 \rightarrow \omega \rightarrow S \rightarrow S/\omega\rightarrow 0$, i.e., from
$$ 0 \rightarrow \HH^{\dim S-1}_{\mathfrak{m}S}(S/\omega) \rightarrow \HH^{\dim S}_{\mathfrak{m}S}(\omega)  \xrightarrow[]{\Upsilon}
\HH^{\dim S}_{\mathfrak{m}S} (S) \rightarrow 0 .$$
We may rewrite this short exact sequence using local duality to obtain
$$ 0 \rightarrow \Ext^{\dim R - \dim S +1}_R (S/\omega,R)^\vee \rightarrow \Ext^{\dim R - \dim S}_R (\omega,R)^\vee  \xrightarrow[]{\Upsilon}
\Ext^{\dim R - \dim S }_R (S,R)^\vee \rightarrow 0 $$
which yields
$$ 0 \rightarrow \Ext^{\dim R - \dim S }_R (S,R) \rightarrow \Ext^{\dim R - \dim S}_R (\omega,R) \rightarrow (\ker \Upsilon)^\vee \rightarrow 0$$
Now $\Ext^{\dim R - \dim S}_R (\omega,R)\cong S$ and we identify
$\omega^\prime=\Ext^{\dim R - \dim S }_R (S,R)$, which is a canonical module for $S$, with its image in $S$.
We have
$(\ker \Upsilon)^\vee \cong S/\omega^\prime$ and
another application of $(-)^\vee$ gives
$(S/JS)^\vee \cong \ker \Upsilon\cong (S/\omega^\prime)^\vee$ and so
$S/JS\cong S/\omega^\prime$, and, therefore,
$JS=\omega^\prime$.


\begin{defi}
For all $e\geq 0$ we define
$$\mathcal{I}_e =\left\{ M^{(e)} \,|\, M\subseteq H \text{ is an } S[T; f]\text{-submodule} \right\} .$$
Notice that this extends the definition of the set of $H$-special ideals given in \cite{Sharp} for the case
where $H$ is $T$-torsion-free.
\end{defi}

Fix a system of parameters $x_1, \dots, x_d$ of $S$ and
think of $H$ as the direct limit
$$
\frac{S}{(x_1, \dots, x_d)S} \xrightarrow[]{x_1 \cdot \ldots \cdot x_d} \frac{S}{(x_1^2, \dots, x_d^2)S}  \xrightarrow[]{x_1 \cdot \ldots \cdot x_d} \dots .
$$
with its standard Frobenius described in the introduction and notice that as we assume $S$ to be Cohen-Macaulay,
the maps in this direct limit are injective.
Pick some element $a + (x_1^i, \dots, x_d^i)S$.
In what follows we will tacitly use the fact that
$\overline{c} T^e(a + (x_1^i, \dots, x_d^i)S)=0$ in the direct limit for some $\overline{c}\in S$ not in any minimal prime if and only if
$a\in ((x_1^i, \dots, x_d^i)S)^*$ (cf.~Remark 4.2 in \cite{Sharp}).

\begin{thm}
Assume that $S$ has a parameter test element.
For all $e\geq 0$, the $p^e$-weak parameter test ideal of $S$ is the image of
$$\cap \left\{ K \,|\, K\in \mathcal{I}_e, \height KS>0 \right\} $$
in $S$.
\end{thm}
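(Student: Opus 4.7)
My plan is to identify both sides with the image in $S$ of $(0^*_H)^{(e)}$, where $0^*_H\subseteq H$ is the tight closure of the zero submodule. Since $0^*_H$ is $S[T;f]$-stable and the chain $\{(0^*_H)^{(e')}\}_{e'\geq 0}$ is ascending (by the discussion before Lemma 2.2 together with Theorem 2.3(d)), the remark preceding this theorem translates the definition of a $p^e$-weak parameter test element into annihilator language: $\bar c\in S$ is such an element if and only if $\bar c$ avoids every minimal prime of $S$ and is the image of an element of $(0^*_H)^{(e)}$. In particular $(0^*_H)^{(e)}\in\mathcal{I}_e$, and once we know it has positive height in $S$, a standard prime-avoidance argument shows that its image in $S$ coincides with the $p^e$-weak parameter test ideal (every element of an ideal of positive height is an $S$-combination of elements avoiding the minimal primes, by repeatedly adding integer multiples of a fixed witness of positive height).

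The central step is the following reverse inclusion at the level of the intersection: for any $S[T;f]$-submodule $M\subseteq H$ with $\height M^{(e)}S>0$, one has $M\subseteq 0^*_H$, hence $(0^*_H)^{(e)}\subseteq M^{(e)}$. To prove it, pick $c\in M^{(e)}$ whose image in $S$ lies in no minimal prime. Since $M$ is $T$-stable and $\{M^{(e')}\}$ is ascending, $cT^{e'}m=0$ in $H$ for every $m\in M$ and every $e'\geq e$. Representing $m$ as a coset $a+(x_1^i,\dots,x_d^i)S$ and invoking the remark preceding the theorem forces $a\in((x_1^i,\dots,x_d^i)S)^*$, so $m\in 0^*_H$. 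Taking annihilators reverses the containment to give $(0^*_H)^{(e)}\subseteq M^{(e)}$ for every such $M$.

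With these two ingredients the proof wraps up quickly. The hypothesis that $S$ admits a parameter test element, applied to the first step with $e=0$, produces an element of $(0^*_H)^{(0)}\subseteq (0^*_H)^{(e)}$ avoiding all minimal primes; hence $(0^*_H)^{(e)}$ itself belongs to the collection being intersected, and the central step then collapses the intersection to $(0^*_H)^{(e)}$. The main obstacle is exactly that central step: its key input is the remark preceding the theorem (which relies on the Cohen--Macaulay hypothesis through injectivity of the transition maps in the direct limit defining $H$), applied uniformly across all elements of $M$ using a single $c$ of positive height.
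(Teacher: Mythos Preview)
Your proof is correct and follows essentially the same approach as the paper: both hinge on the observation that any $S[T;f]$-submodule $M\subseteq H$ with $\height M^{(e)}S>0$ is contained in the tight closure of zero, and that this tight closure module itself---which the paper constructs as $M=\Ann_H\bigl(\bigoplus_{e'\geq 0}S\bar cT^{e'}\bigr)$ for a fixed parameter test element $c$---belongs to the family being intersected. Your version makes the role of $0^*_H$ explicit and factors both sides through $(0^*_H)^{(e)}$, whereas the paper proves the two inclusions directly without naming that module, but the substance is identical.
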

\begin{proof}
Let $\tau$ be the intersection in the statement of the theorem.
Assume that $d$ is a $p^e$-weak parameter test element. If $M\subseteq H$ is an $S[T; f]$-submodule for which
$\height M^{(e)}S>0$, we can find a $c\in  M^{(e)}$ whose image in $S$ is not in a minimal prime such that
$c T^{e^\prime} M=0$ for all ${e^\prime}\geq e$, and hence $d T^{e^\prime} M=0$ for all ${e^\prime}\geq e$, and in particular $d\in M^{(e)}$.
We deduce that $d\in \tau$.
We next show that all elements in $\tau$ are  $p^{e}$-weak parameter test elements.

Fix a $c\in R$ whose image $\overline{c}$ in $S$ is a parameter test element.
Let $h\in H$ be such that such that $c T^{e^\prime} h=0$  for all ${e^\prime}\geq 0$.
Define $L=\oplus_{{e^\prime}\geq 0} S \overline{c} T^{e^\prime}$ and $M=\Ann_H L$; notice that $h\in M$.
Now $c\in M^{(0)}\subseteq  M^{(e)}$ and so $\height M^{(e)}S>0$.
Also $\tau\subseteq M^{(e)}$
so $\tau T^e M \subseteq  M^{(e)} T^e M = 0$, and in particular $\tau T^e h=0$.
\end{proof}

\begin{lem}\label{Lemma: positive height}
Assume that $S$ has a parameter test element.
Let $M$ be a $S[T; f]$-submodule of $H$.
If $\height M^{(e)} S>0$ for some $e\geq 0$, then $\height M^{(e^\prime)} S >0$ for all $e^\prime\geq 0$.
\end{lem}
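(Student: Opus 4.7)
The plan is to reduce the claim to showing $\height M^{(0)}S>0$ whenever $\height M^{(e)}S>0$ for some $e$. First I would observe that $\{M^{(e)}\}_{e\geq 0}$ is an ascending chain of ideals of $R$: if $r\in M^{(e)}$ and $m\in M$, then $rT^{e+1}m=rT^{e}(Tm)$, which vanishes because $Tm\in M$ by $S[T;f]$-stability. Since an inclusion of ideals only increases height in $S$, the positive-height property automatically propagates from $e$ upward; the task reduces to pushing it down to $e'=0$.

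For that, I would pick $c\in M^{(e)}$ whose image $\overline{c}\in S$ avoids every minimal prime. Then for each $m\in M$ and every $e'\geq e$ one has $cT^{e'}m=0$ in $H$. Writing $m=[a+(x_1^i,\ldots,x_d^i)S]$ in the standard presentation of the direct limit---whose transition maps are injective thanks to the Cohen-Macaulay hypothesis---the characterization of tight-closure membership recalled just before the lemma yields $a\in ((x_1^i,\ldots,x_d^i)S)^*$.

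The final step is to invoke the parameter test element $\overline{c}_0$ supplied by hypothesis. Taking the instance $e=0$ of its defining property for the parameter ideal $(x_1^i,\ldots,x_d^i)S$, we obtain $\overline{c}_0\,a\in (x_1^i,\ldots,x_d^i)S$, so $\overline{c}_0\,m=0$ in $H$. Since $m\in M$ was arbitrary, any lift $c_0\in R$ of $\overline{c}_0$ lies in $(0:_{R}M)=M^{(0)}$, and as $\overline{c}_0$ avoids every minimal prime we conclude $\height M^{(0)}S>0$, whence $\height M^{(e')}S>0$ for all $e'\geq 0$ by the monotonicity already established.

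I expect the only real obstacle to be making the translation between the vanishing of $\overline{c}\,T^{e'}m$ in the direct limit and ordinary ideal-membership of $\overline{c}\,a^{p^{e'}}$ rigorously---but once the Cohen-Macaulay hypothesis secures injectivity of the transition maps and the parameter test element takes over to bridge between the $e$-th and $0$-th stages, the rest is routine.
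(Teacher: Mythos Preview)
Your proposal is correct and follows essentially the same approach as the paper: reduce to $e'=0$ via the ascending chain, pick an element of $M^{(e)}$ avoiding the minimal primes to force every $m\in M$ to land in the tight closure of a parameter ideal via the direct-limit description, and then apply a parameter test element to conclude that it annihilates $M$, hence lies in $M^{(0)}$. The paper's proof is organized identically, with only cosmetic differences in notation.
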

\begin{proof}
We  assume that $\height M^{(e)} S >0$ for some $e\geq 0$ and show that $\height M^{(0)} S>0$. Since $ M^{(e^\prime)}\supseteq   M^{(0)}$
for all $e^\prime\geq 0$, we will then have  $\height M^{(e^\prime)} S >0 $ for all $e^\prime\geq 0$.

Pick any element $d\in  M^{(e)}$ whose image in $S$ is not in any minimal prime and notice that
$d\in  M^{(j)}$ for all $j\geq e$, i.e.,
$d ST^j M =0$ for all $j\geq e$.

Let $\mathbf{x}=(x_1, \dots, x_{\dim S})$ be a full system of parameters of $S$
and write $\mathbf{x}^nS $ for the ideal of $S$ generated by $x_1^n, \dots, x_{\dim S}^n$.
Now think of $H$
as the direct limit of
$$\frac{S}{\mathbf{x}S} \rightarrow \frac{S}{\mathbf{x}^2 S} \rightarrow \frac{S}{\mathbf{x}^3 S} \rightarrow \dots $$
where the (injective) maps are given by multiplication by $x_1 \cdot  \ldots \cdot x_{\dim S}$.

Any element $m\in M$ can be identified with an element represented by $s+\mathbf{x}^i S$ in the direct limit system above,
and the fact that $d ST^j m=0$ for all $j\geq e$ shows that $d s^{[p^j]} \in (\mathbf{x}^i S)^{[p^j]}$ for all $j\geq e$
and hence $s\in (\mathbf{x}^i S)^*$. Now for any parameter test element $c$,  $c (\mathbf{x}^i S)^* \subseteq \mathbf{x}^i S$, and we deduce that
$c s \in \mathbf{x}^i S$. We now see that any parameter test element kills $M$ and so is in $M^{(0)}S$, hence
$M^{(0)}S$ has positive height.

\end{proof}

We are now ready to give an explicit description of weak parameter test ideals and to do so
we need to recall the following notion (cf.~section 5 in \cite{Katzman}).
For any ideal $L\subseteq R$ and $u\in R$ we define  $L^{\star u}$ to be the smallest ideal
$A$ containing $L$ with the property that $u A \subseteq A^{[p]}$.

\begin{thm}\label{Theorem: description of weak parameter test ideals}
Let $c\in R$ be such that its image in $S$ is a test element.
For all $e\geq  0$, the $p^{e}$-weak parameter test ideal $\overline{\tau}_e$ of $S$ is given by
$$\left( \left(
\left(cJ+I\right)^{\star u} \right)^{[p^e]} :_R u^{\nu_e} J
\right)  S.$$
\end{thm}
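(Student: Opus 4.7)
The plan is to combine the previous theorem (which expresses $\overline{\tau}_e$ as the image in $S$ of $\bigcap \{K \in \mathcal{I}_e : \height KS > 0\}$) with Theorem \ref{Theorem: graded annihilators of quotients} to convert the intersection into a single colon ideal on the $R$ side. The strategy is to parametrize $S[T;f]$-submodules of $H$ by $E_S$-ideals $L \subseteq J$, identify the positive-height condition with ``$L$ contains $(cJ+I)^{\star u}$,'' and then use $\cap$-flatness of Frobenius to extract the smallest such $L$.

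First I would parametrize $\mathcal{I}_e$. Any $S[T;f]$-submodule $M \subseteq H$ lifts under $\Upsilon$ to an $S[T;f]$-submodule $M' \subseteq E_S$ containing $\ker \Upsilon = \Ann_{E_S} J$. By Matlis duality $M' = \Ann_{E_S} L$ for a unique ideal $L\subseteq J$, and $L$ is automatically an $E_S$-ideal. Applying Theorem \ref{Theorem: graded annihilators of quotients} with $A = \ker\Upsilon$, $B = M'$, yields
$$M^{(e)} = \left( (L^{[p^e]} :_R u^{\nu_e}) :_R J \right) = (L^{[p^e]} :_R u^{\nu_e} J).$$

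The main step is to show that, for such an $L$, the ideal $M^{(e)}$ has positive height in $S$ if and only if $L \supseteq (cJ+I)^{\star u}$. For one direction, suppose $L \supseteq (cJ+I)^{\star u}$: a short induction on $e$ starting from $uL \subseteq L^{[p]}$ gives $u^{\nu_e} L \subseteq L^{[p^e]}$, so $u^{\nu_e} cJ \subseteq L^{[p^e]}$, showing $c \in M^{(e)}$; since $c$ is a test element, $\height M^{(e)}S > 0$. For the reverse direction, use Lemma \ref{Lemma: positive height} to reduce to the case $e=0$, where $M^{(0)} = (L :_R J)$. Given any element of $M$, lift it to a representative $s + (x_1^i,\ldots,x_d^i)S$ in the direct limit defining $H$; an element of positive height in $M^{(0)}$ forces $s$ to lie in $((x_1^i,\ldots,x_d^i)S)^*$, so the parameter test element $c$ satisfies $cs \in (x_1^i,\ldots,x_d^i)S$, i.e.\ $c$ kills $M$. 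Thus $c \in M^{(0)}$, i.e.\ $cJ \subseteq L$, and combined with $I \subseteq L$ and the $E_S$-ideal property this yields $(cJ+I)^{\star u} \subseteq L$.

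The hard part is this bidirectional identification of the positive-height condition with containment of $(cJ+I)^{\star u}$: it is precisely here that the parameter-test-element characterization of tight closure of parameter ideals in $H$ enters the argument, and where Lemma \ref{Lemma: positive height} is essential. Once the characterization is in hand, the conclusion is a formal computation: $(cJ+I)^{\star u}$ itself is an $E_S$-ideal contained in $J$ (since $J$ is an $E_S$-ideal containing $cJ+I$), so it is the smallest admissible $L$; using $\cap$-flatness of Frobenius over the regular ring $R$ (and the fact that colon with a fixed ideal is order-preserving in the first argument), the intersection collapses to the single term coming from the smallest $L$, giving
$$\overline{\tau}_e = \left( \big( (cJ+I)^{\star u} \big)^{[p^e]} :_R u^{\nu_e} J \right) S,$$
as required.
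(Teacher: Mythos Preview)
Your proof is correct and follows essentially the same route as the paper: parametrize $S[T;f]$-submodules of $H$ by $E_S$-ideals $L\subseteq J$, use Theorem~\ref{Theorem: graded annihilators of quotients} to compute $M^{(e)}=(L^{[p^e]}:_R u^{\nu_e}J)$, identify the positive-height condition via Lemma~\ref{Lemma: positive height} with $L\supseteq (cJ+I)^{\star u}$, and conclude that the intersection is realized at this minimal $L$. Your appeal to $\cap$-flatness in the final step is unnecessary---simple monotonicity of $(-)^{[p^e]}$ and of the colon operation in its first argument already collapses the intersection to the single term.
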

\begin{proof}
Write $L=\left(cJ+I\right)^{\star u}$.
Notice that $L$ is an $E_S$-ideal and that since $c\in ((cJ+I)^{\star u} : J)$, we have $\height LS >0$.
Now
\begin{eqnarray*}
\overline{\tau}_e & = & \cap \{(M^{(e)}S \,|\, M\subseteq H \text{ is an } S[T;f] \text{-submodule},\ \height M^{(e)}S > 0 \}\\
&=&   \cap \{\left(A^{[p^e]} :_R u^{\nu_e} J \right) S \,|\,
A\subseteq J \text{ is an $E_S$-ideal},\ \height \left(A^{[p^e]} :_R u^{\nu_e} J \right) S > 0 \}
\end{eqnarray*}
and
$\left(L^{[p^e]} :_R u^{\nu_e} J \right) S$ is one of the ideals in this intersection,
hence $\overline{\tau}_e \subseteq \left(L^{[p^e]} :_R u^{\nu_e} J \right) S$.

Now let $A\subseteq J$ be any $E_S$-ideal for which $\height \left(A^{[p^e]} :_R u^{\nu_e} J \right) S > 0$.
Lemma \ref{Lemma: positive height} implies that $\height \left(\Ann_{E_S} A\right)^{(0)}=\height AS>0$ and
since the image of $c$ in $S$ is in $\overline{\tau}_0\subseteq (A : J)S$, we have $ c J \subseteq A$.
Proposition 5.5 in \cite{Katzman} now implies that $L \subseteq A$ and hence that
$  \left(L^{[p^e]} :_R u^{\nu_e} J\right) \subseteq  \left(A^{[p^e]} :_R u^{\nu_e}J \right) $.
We conclude that
$\left(L^{[p^e]} :_R u^{\nu_e} J \right) S \subseteq \overline{\tau}_e$.
\end{proof}

\begin{cor}\label{Corollary: stabilization of test ideals}
Let $\overline{\tau}$ be the union of the ascending chain $\{ \overline{\tau}_e \,|\, e\geq 0 \}$.
If $\overline{\tau}_i=\overline{\tau}_{i+1}$ for some $i\geq 0$ then $\overline{\tau}=\overline{\tau}_i$.
\end{cor}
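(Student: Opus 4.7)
The plan is to identify each $\overline{\tau}_e$ with the $S$-image of the $e$-th graded annihilator component of a specific $S[T;f]$-submodule of $H$, and then to apply a stabilization principle analogous to Theorem \ref{Theorem: graded annihilators}(d). Write $L=(cJ+I)^{\star u}$ as in the proof of Theorem \ref{Theorem: description of weak parameter test ideals}; this is an $E_S$-ideal with $L\subseteq J$, and $\overline{\tau}_e$ is the image in $S$ of $(L^{[p^e]}:_R u^{\nu_e}J)$. Let $M=\Ann_{E_S}(L)$, which is an $S[T;f]$-submodule of $E_S$. Since $L\subseteq J$, the kernel $\ker\Upsilon=\Ann_{E_S}(J)$ lies inside $M$, so $\overline{M}:=M/\ker\Upsilon$ is an $S[T;f]$-submodule of $H$. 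Combining Theorem \ref{Theorem: graded annihilators}(b) with Theorem \ref{Theorem: graded annihilators of quotients} yields $\overline{M}^{(e)}=(M^{(e)}:_R J)=(L^{[p^e]}:_R u^{\nu_e}J)$, whose image in $S$ is exactly $\overline{\tau}_e$.

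With these identifications, the corollary reduces to the assertion that $\overline{M}^{(i)}=\overline{M}^{(i+1)}$ forces $\overline{M}^{(e)}=\overline{M}^{(i)}$ for all $e\geq i$. This is the natural analogue of Theorem \ref{Theorem: graded annihilators}(d) for $S[T;f]$-submodules of $H$, indicated in the remark at the end of Section \ref{Section: Basic properties}. Following the strategy of the original proof, I would express $\overline{M}^{(e)}$ as the kernel of an iterated Frobenius-type map derived from the Matlis dual of $\overline{M}$, then apply Lyubeznik's Proposition 2.3(b). A useful auxiliary tool is the chain $L_e:=J\cap M^{(e)}\subseteq J$, which satisfies the self-recursion $L_{e+1}=J\cap (L_e^{[p]}:_R u)$, derived from $M^{(e+1)}=((M^{(e)})^{[p]}:_R u)$ together with $uJ\subseteq J^{[p]}$ and the $\cap$-flatness of $R^{1/p}$. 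This chain stabilizes immediately from pairwise equality by direct inspection of the recursion, and one has $\overline{M}^{(e)}=(L_e:_R J)$.

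The main obstacle I expect is that the hypothesis $\overline{M}^{(i)}=\overline{M}^{(i+1)}$ is an equality of colons by $J$, which does not a priori imply the underlying equality $L_i=L_{i+1}$ that would let the $L_e$-recursion conclude directly. Overcoming this requires a direct-system argument whose kernels are genuinely the $\overline{M}^{(e)}$ (rather than the $L_e$): a natural candidate is the family of maps $R/L\to\Hom_R(J/L,\,R/L^{[p^e]})$ sending $r+L$ to the homomorphism $j+L\mapsto r u^{\nu_e} j+L^{[p^e]}$, with transitions induced by post-composition with multiplication by $u^{p^e}$. Verifying that these transitions endow the targets with the iterated-Frobenius structure required by Lyubeznik's proposition is the delicate point I would need to work out carefully; the rest would follow routinely from the machinery developed in Section \ref{Section: Basic properties}.
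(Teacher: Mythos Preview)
Your route through $\overline{M}=M/\ker\Upsilon\subseteq H$ and Theorem~\ref{Theorem: graded annihilators of quotients} is more careful than the paper's. The paper simply sets $M=\Ann_{E_S}L$ (as a submodule of $E_S$, not of $H$), asserts $\grAnn M=\bigoplus_{e\geq 0}\overline{\tau}_eT^e$, and invokes Theorem~\ref{Theorem: graded annihilators}(d) directly. Taken literally this identification is off by exactly the colon by $J$ that you supply: Theorem~\ref{Theorem: graded annihilators}(b) gives $M^{(e)}=(L^{[p^e]}:_R u^{\nu_e})$, whereas by Theorem~\ref{Theorem: description of weak parameter test ideals} the preimage of $\overline{\tau}_e$ is $(L^{[p^e]}:_R u^{\nu_e}J)$. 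The two agree when $J=R$ (the Gorenstein case), but in general your passage to $\overline{M}$ is what makes the identification honest---and then, as you say, one needs the quotient analogue of Theorem~\ref{Theorem: graded annihilators}(d), which is not what is literally proved there.

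The obstacle you isolate is genuine and is exactly the point the paper's short proof elides. Applying Lyubeznik's Proposition~2.3(b) to the map $J/L\xrightarrow{u}J^{[p]}/L^{[p]}$ gives stabilization for the kernels $L_e/L$, but the hypothesis of the corollary only gives $(L_i:_R J)=(L_{i+1}:_R J)$, and this does not formally force $L_i=L_{i+1}$. Your proposed system $R/L\to\Hom_R(J/L,\,R/L^{[p^e]})$ does have the right kernels $\overline{M}^{(e)}/L$, but the targets are not of the form $F_R^e(N)$ for a fixed $N$, so Lyubeznik's proposition does not apply out of the box; the transition compatibility you would need to check is essentially equivalent to reproving the stabilization statement by hand. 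In short: your diagnosis of where the difficulty lies is correct and sharper than the paper's own argument, but neither the $L_e$-recursion nor the $\Hom$-target construction you sketch closes the gap without further input.
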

\begin{proof}
Write $L=\left(cJ+I\right)^{\star u}$ and let $M=\Ann_{E_S} L$.
Theorem \ref{Theorem: description of weak parameter test ideals} together with Theorem \ref{Theorem: graded annihilators}(b)
imply that
$\grAnn M=\oplus_{e\geq 0} \overline{\tau}_e T^e$
and the result follows from Theorem \ref{Theorem: graded annihilators}(d).
\end{proof}

We can translate Theorem \ref{Theorem: description of weak parameter test ideals}
above to an algorithm as follows.
\begin{enumerate}
  \item Given $R$ and $I$, compute the $u\in(I^{[p]} :_R I)$ corresponding to the Frobenius map on $E_S$ which makes
$\Upsilon$ into an homomorphism of $S[T; f]$-modules and also find $J=\ker \Upsilon$ (cf.~\S 7 in \cite{Katzman}).
  \item Find a single parameter test element $c$ (e.g., by inspecting the Jacobian of $I$ (cf.~Chapter 2 in \cite{Huneke})).
  \item Compute $\left(cJ+I\right)^{\star u}$ (cf.~\S 5 in \cite{Katzman}).
  \item Output the $p^{e}$-weak parameter test ideal
  $\left( \left(
\left( cJ+I\right)^{\star u} \right)^{[p^e]} :_R u^{\nu_e} J
\right) S$.
\end{enumerate}

\begin{ex}
Let $\mathbb{K}$ be a field of characteristic 2, $R=\mathbb{K}[\![a,b,c,d]\!]$. Define
$$I=(a,b)R\cap (a,c)R \cap (c,d)R \cap (c+d, a^3+b d^2)R=
\left( a(c+d), bc(c+d), d(a^3+b c d) \right)R .$$
The quotient $S=R/I$ is reduced, 2-dimensional with minimal resolution
\footnote{All unjustified assertions in this and later computational examples are based on calculations carried out with \cite{Macaulay2}.}
$$
0 \rightarrow R^2 \xrightarrow[]{\left( \begin{array}{rr} bc & a^2 d\\ a & d^2\\ 0 & c+d\\ \end{array} \right)}
R^3 \xrightarrow[]{\left( \begin{array}{rrr}  a(c+d) &  bc(c+d) & d(a^3+b c d) \end{array}  \right)} R \rightarrow S \rightarrow 0
$$
which shows that $S$ is Cohen-Macaulay of type 2, hence non-Gorenstein.
The canonical module of $R/I$ is computed from this resolution as
$\Ext^2_R(R/I, R)$ which is isomorphic to the ideal $J$ generated by
$a$ and $d^2$.
The element $u\in (I^{[p]} :_R I)$ corresponding to the Frobenius map on $E_S$ is
$u=a d (c+d) (a^3 + b c d)$ and
$I_1(uR+I)=(a^2, d)R + I\neq R$, hence $S$ is not $F$-injective (cf.~Theorem 4.6 in \cite{Katzman}.)
The calculations in steps (3) and (4) in the algorithm above produce $2^i$-weak parameter test ideals $\overline{\tau}_i$
$$\overline{\tau}_0=\left( ad, ac, bd^2, a^3, c(c+d), b(c+d) \right)R, $$
$$\overline{\tau}_1=\overline{\tau}_2=\left( ad, ac, bd a^3, c(c+d), bc \right)R$$
and using Corollary \ref{Corollary: stabilization of test ideals} we deduce that $\overline{\tau}_i=\overline{\tau}_1$
for all $i\geq 1$.
\end{ex}

\section{Quasimaximal filtrations}\label{section: quasimaximal filtrations}

Throughout this section we consider a fixed $S[T;f]$-module structure of $E_S$ corresponding to a fixed $u\in (I^{[p]} : I)$,
as described in section \ref{Section: Introduction}.

As in section 4 of \cite{Lyubeznik},
for any $S[T;f]$-module $M$ we write $M_{\text{red}}$ for $M/\Nil(M)$ and
$M^*$ for the $S[T; f]$-submodule $\cap_{e\geq 0} ST^e M$ of $M$.
We note that if $M$ is Artinian as an $S$-module, there exists an $\alpha\gg 0$ such that
\begin{eqnarray*}
( M_{\text{red}} )^* & = & \frac{(\cap_{e\geq 0} ST^e M)+\Nil(M)}{\Nil(M)}\\
&=& \frac{ST^\alpha M+\Nil(M)}{\Nil(M)}\\
&\cong&\frac{ ST^\alpha M}{\Nil(M)\cap ( ST^\alpha M)}\\
&=&\frac{\cap_{e\geq 0} ST^e M}{\Nil(M)\cap (\cap_{e\geq 0} ST^e M)}\\
&=&(M^*)_{\text{red}}
\end{eqnarray*}
and
denote both of these   $M_{\text{red}}^*$.
We also recall the following:

\begin{defi}
A filtration
$0=M_0 \subset \dots \subset M_s=M$ of an $S[T;f]$-module $M$ is called  \emph{quasimaximal} if for all $1\leq i\leq s$ the modules
$(M_i/M_{i-1})_{\text{red}}^*$ are non-zero simple $S[T;f]$-modules.
\end{defi}
Artinian $S[T;f]$-modules have quasimaximal filtrations; their lengths and simple factors are invariants of the module (cf.~section 4 in \cite{Lyubeznik}).

In this section we study quasimaximal filtrations of $E_S$ and in doing so we introduce an operation on $E_S$-ideals which will be a key ingredient
for obtaining the results of the next section. We start with a description of such filtrations in general.

\begin{defi}
Let $M$ be an $S[T; f]$-module.
We define $\mathcal{A}(M)$ to be the set of all $S[T;f]$-submodules $N\subseteq M$ with the property that
$\Nil(M/N)=0$.
\end{defi}

\begin{thm}\label{Theorem: General quasi-maximal filtrations}
Let $M$ be an $S[T; f]$-module which is Artinian as an $S$-module.
Let
$0 \subseteq N_1 \subsetneq \dots \subsetneq N_s=M $
be a chain with $N_1, \dots, N_s\in \mathcal{A}(M)$ which is saturated in the sense that
for all $1\leq i\leq s-1$, there is no element in $\mathcal{A}(M)$ strictly between $N_i$ and $N_{i+1}$ and
there is no element in $\mathcal{A}(M)$ strictly contained in  $N_1$.
Then
$0 \subsetneq N_1 \subsetneq \dots \subsetneq N_s=M $
is a quasimaximal filtration of $M$ whenever $N_1\neq 0$ and
$N_1 \subsetneq \dots \subsetneq N_s=M $
is a quasimaximal filtration of $M$ whenever $N_1= 0$.
\end{thm}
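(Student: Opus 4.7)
The plan is to show, for each consecutive pair $N_i\subsetneq N_{i+1}$ in the chain, that $(N_{i+1}/N_i)_{\text{red}}^{*}$ is a nonzero simple $S[T;f]$-module. My first observation is that since $N_i\in\mathcal{A}(M)$, the quotient $M/N_i$ has zero nil part, and therefore so does its submodule $N_{i+1}/N_i$; consequently $(N_{i+1}/N_i)_{\text{red}}^{*}=(N_{i+1}/N_i)^{*}$. Artinianness of $M$ forces the descending chain $\{ST^{e}(N_{i+1}/N_i)\}_{e\geq 0}$ to stabilise to $(N_{i+1}/N_i)^{*}$; if the limit were $0$, some $T^{e}$ would annihilate $N_{i+1}/N_i$, which combined with $\Nil(N_{i+1}/N_i)=0$ would force $N_{i+1}=N_i$, contradicting strict inclusion. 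So each factor is nonzero.

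The heart of the argument is simplicity, which I would establish by contradiction. Suppose $0\subsetneq L\subsetneq (N_{i+1}/N_i)^{*}$ is an $S[T;f]$-submodule, and lift $L$ to $L'\subseteq M$ via the canonical projection $M\twoheadrightarrow M/N_i$, so that $N_i\subsetneq L'\subseteq N_{i+1}$. The key construction is to form $\widetilde{L'}\subseteq M$ as the preimage of $\Nil(M/L')$; a short verification shows $\widetilde{L'}$ is the smallest element of $\mathcal{A}(M)$ containing $L'$, and because $L'\subseteq N_{i+1}\in\mathcal{A}(M)$, any $m\in\widetilde{L'}$ satisfies $T^k m\in L'\subseteq N_{i+1}$, so its image in $M/N_{i+1}$ is nil and hence zero, giving $\widetilde{L'}\subseteq N_{i+1}$. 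Saturation now forces $\widetilde{L'}\in\{N_i,N_{i+1}\}$; the value $N_i$ is impossible because $L'\supsetneq N_i$, so $\widetilde{L'}=N_{i+1}$, meaning $N_{i+1}/L'$ is nil in $M/L'$. Invoking the bounded-nilpotence result for Artinian $S[T;f]$-modules (Proposition~4.4 of \cite{Lyubeznik}) yields a uniform $k$ with $T^{k}N_{i+1}\subseteq L'$; reducing modulo $N_i$ gives $ST^{k}(N_{i+1}/N_i)\subseteq L$, and taking $k$ large enough for stabilisation forces $(N_{i+1}/N_i)^{*}\subseteq L$, contradicting $L\subsetneq (N_{i+1}/N_i)^{*}$.

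To complete the argument I would dispatch the initial factor. Note that $\mathcal{A}(M)$ is closed under finite intersections (since $M/(N\cap N')$ embeds in $M/N\oplus M/N'$) and contains $\Nil(M)$, while every $N\in\mathcal{A}(M)$ contains $\Nil(M)$; hence $\Nil(M)$ is the unique minimum of $\mathcal{A}(M)$. The saturation condition at the bottom of the chain therefore forces $N_1$ to coincide with this minimum, and $N_1=0$ occurs precisely when $\Nil(M)=0$, which accounts for the dichotomy in the stated conclusion. The remaining factors are then handled by the simplicity argument above.

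I expect the main obstacle to be the simplicity step, and in particular the passage from the elementwise $T$-nilpotence of $N_{i+1}/L'$ inside $M/L'$ to a uniform annihilating power $T^k$; this is where the bounded-nilpotence property for Artinian $S[T;f]$-modules is essential. The remaining bookkeeping---verifying the closure of $\mathcal{A}(M)$ under intersections, and that $\widetilde{L'}$ is indeed the least $\mathcal{A}(M)$-element containing $L'$---is routine.
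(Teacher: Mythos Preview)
Your treatment of the intermediate factors $N_{i+1}/N_i$ ($1\leq i\leq s-1$) is essentially the paper's: both form the preimage of $\Nil(M/L')$ in $M$ (the paper calls it $B$), verify it lies in $\mathcal{A}(M)$ and is trapped between $N_i$ and $N_{i+1}$, invoke saturation, and then use bounded nilpotence to identify $(L')^*$ with $(N_{i+1})^*$ modulo $N_i$. The paper phrases the conclusion as ``every $S[T;f]$-submodule of $(N_{i+1}/N_i)^*$ is $0$ or the whole thing'' rather than by contradiction, but the content is the same.

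There is a gap at the bottom of the filtration. When $N_1\neq 0$ you still owe a proof that $(N_1/0)^*_{\text{red}}$ is a nonzero simple $S[T;f]$-module, and your simplicity argument does not apply here: it relies on $N_i\in\mathcal{A}(M)$ to obtain $\Nil(N_{i+1}/N_i)=0$, whereas $0\in\mathcal{A}(M)$ holds exactly when $\Nil(M)=0$, i.e., precisely when $N_1=0$. The paper devotes its final paragraph to this case, rerunning the $B$-construction relative to $N_1\cap\Nil(M)$ and using the minimality of $N_1$ in $\mathcal{A}(M)$. Your identification $N_1=\Nil(M)$ is correct and neatly explains the dichotomy in the statement, but it does not by itself dispatch this factor; in fact it forces $(N_1)_{\text{red}}=0$ and hence $(N_1)^*_{\text{red}}=0$, so the ``nonzero'' clause in the stated definition of quasimaximal cannot be met at this step. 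What the paper's separate argument actually establishes for the bottom factor is only that every proper $S[T;f]$-submodule of $(N_1)^*_{\text{red}}$ vanishes---your observation makes explicit that nonzeroness fails here and that the bottom step has to be read in the looser sense of Lyubeznik's original framework.
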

\begin{proof}
Fix any $1\leq i\leq s-1$.
We have $\Nil(N_{i+1}/N_i)\subseteq \Nil(M/N_i)=0$ hence
$(N_{i+1}/N_i)^*_\text{red}=(N_{i+1}/N_i)^*$.

Pick any $S[T;f]$-submodule $A\subseteq M$ such that $N_i \subseteq A \subseteq N_{i+1}$ and let $B$ be the $S[T;f]$-submodule of
$M$ for which $\Nil(M/A)=B/A$.
We have
$$\Nil(M/B)=\Nil\left( \frac{M/A}{B/A}\right)=\Nil\left( \frac{M/A}{\Nil(M/A)}\right)=0 $$
so $B\in \mathcal{A}(M)$.
Also, the natural surjection $M/A \twoheadrightarrow M/N_{i+1}$ maps
$\Nil(M/A)=B/A$ into $\Nil(M/N_{i+1})=0$ hence $B\subseteq N_{i+1}$.
Now $N_i \subseteq B \subseteq N_{i+1}$ and the saturation of our chain implies that either
$B=N_i$ (in which case $A=N_i$)
or $B = N_{i+1}$.

We now show that $(N_{i+1}/N_i)^*=(N_{i+1}^*+N_i)/N_i$ is simple.
Pick any sub-$S[T;f]$-module $A/N_i$ of $(N_{i+1}/N_i)^*$ where $A\subseteq M$ is an $S[T;f]$-submodule
of $M$ containing $N_i$ for which $A/N_i \subseteq (N_{i+1}/N_i)^*$.
Now $(A/N_i)^* \subseteq (N_{i+1}/N_i)^*$;
if $(A/N_i)^*=(A^*+N_i)/N_i=0$, then $A^*\subseteq N_i$ and $A/N_i\subseteq \Nil(M/N_i)=0$.
Assume that $(A/N_i)^*\neq 0$ and let $B$ be as in the previous paragraph, i.e., $\Nil(M/A)=B/A$.
Since $ST^e B \subseteq A$ for all $e\gg 0$, we have $B^*=A^*$, hence
$(A/N_i)^*=(B/N_i)^*=(N_{i+1}/N_i)^*$.
Now
$(N_{i+1}/N_i)^*=(A/N_i)^*\subseteq A/N_i \subseteq (N_{i+1}/N_i)^*$
so $A/N_i = (N_{i+1}/N_i)^*$.

It remains to show that, if $N_1\neq 0$,
$$(N_1)^*_\text{red}=\left(\frac{N_1}{N_1\cap \Nil(M)}\right)^*$$
is simple. To simplify notation, write $N=N_1$.
Pick any $S[T;f]$-submodule $A$ of $M$ for which
$N\cap \Nil(M) \subseteq A \subseteq N$, and, as before, write $\Nil(M/A)=B/A$ for
an $S[T;f]$-submodule $B$ of $M$. Again we have $B\in \mathcal{A}(M)$ and $B\subseteq N$, so $B=N$.
Pick any
$(A/N\cap \Nil(M))^* \subseteq (N/N\cap \Nil(M))^*$ and
assume $(A/N\cap \Nil(M))^*\neq 0$;
again we have $A^*=B^*$ and
$(N/N \cap \Nil(M))^*=(A/N\cap \Nil(M))^*\subseteq A/N\cap \Nil(M) \subseteq (N/N\cap \Nil(M))^*$
so $A/(N \cap \Nil(M))= (N/N\cap \Nil(M))^*$.
\end{proof}

We now produce quasimaximal filtrations of $E_S$ when it is $T$-torsion free. These are described in terms of prime $E_S$-ideals.
Recall that in this case the set of $E_S$-ideals coincides with the set of $E_S$-special ideals (cf.~\S 6 in \cite{Katzman})
and that this set is finite (cf.~Theorem 3.10 in \cite{Sharp}).

\begin{cor}\label{Corollary: quasimaximal filtrations of $E_S$ in the $T$-torsion free case}
Assume that $E_S$ is $T$-torsion free and let $P_1, \dots, P_n$ be all its prime $E_S$-ideals ordered so that
$P_i \nsubseteq P_j$ for all $1\leq i < j \leq n$.
The chain
$$ 0 \subset \Ann_{E_S} P_1 \subset
\dots\subset
\Ann_{E_S} \bigcap_{j=1}^i P_j \subset\dots\subset
\Ann_{E_S} \bigcap_{j=1}^n P_j \subset E_S.
$$
is a quasimaximal filtration of $E_S$.
Therefore, the set of annihilators of the factors of any quasimaximal filtration of $E_S$ is
$\left\{ P_1, \dots, P_n \right\}$.
\end{cor}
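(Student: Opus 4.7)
The plan is to apply Theorem \ref{Theorem: General quasi-maximal filtrations} with $M=E_S$. Writing $J_i=\bigcap_{j=1}^i P_j$ and $N_i=\Ann_{E_S}J_i$, each $N_i$ is an $S[T;f]$-submodule by Proposition \ref{Proposition: basic properties of $E_R$-ideals}(a). First I would dispose of two preliminaries: the inclusions $N_i\subsetneq N_{i+1}$ are strict because $J_i\nsubseteq P_{i+1}$ by primality of $P_{i+1}$ and the ordering; and $N_n=E_S$, because $I=E_S^{(0)}$ is radical (Corollary 3.7 of \cite{Sharp}, using $T$-torsion-freeness) with minimal primes in $\{P_1,\dots,P_n\}$ by Proposition \ref{Proposition: basic properties of $E_R$-ideals}(c), so $I=J_n$.

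Next I would show that every $S[T;f]$-submodule $N\subseteq E_S$ belongs to $\mathcal{A}(E_S)$. Let $N'$ be the preimage of $\Nil(E_S/N)$ in $E_S$. Then $ST^eN'\subseteq N$ for some $e$, so $(N')^{(e)}\supseteq N^{(0)}$. Since $\Nil(N')\subseteq\Nil(E_S)=0$, the chain $\{(N')^{(e)}\}_e$ is constant (Corollary 3.7 of \cite{Sharp}), giving $(N')^{(0)}\supseteq N^{(0)}$. Combined with the opposite inclusion from $N\subseteq N'$, we obtain $(N')^{(0)}=N^{(0)}$, and Matlis double annihilation forces $N=N'$, i.e., $\Nil(E_S/N)=0$.

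The main obstacle is verifying saturation. Suppose $N_i\subsetneq N\subsetneq N_{i+1}$ with $N=\Ann_{E_S}J$; the preceding paragraph and Corollary 3.7 of \cite{Sharp} make $J$ a radical $E_S$-ideal with $J_{i+1}\subsetneq J\subsetneq J_i$ and minimal primes in $\{P_1,\dots,P_n\}$. For any such radical $L$, set $\mathcal{V}(L)=\{P_k:P_k\supseteq L\}$; then $L=\bigcap_{P_k\in\mathcal{V}(L)}P_k$, so $L$ is determined by $\mathcal{V}(L)$. The ordering forces $\mathcal{V}(J_{i+1})\setminus\mathcal{V}(J_i)=\{P_{i+1}\}$: any $P_k\supseteq P_{i+1}$ has $k\leq i+1$ by the ordering, while $P_{i+1}$ itself contains no $P_j$ with $j\leq i$. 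Since $\mathcal{V}(J_i)\subseteq\mathcal{V}(J)\subseteq\mathcal{V}(J_{i+1})$, necessarily $\mathcal{V}(J)\in\{\mathcal{V}(J_i),\mathcal{V}(J_{i+1})\}$ and hence $J\in\{J_i,J_{i+1}\}$, a contradiction. The step $0\subsetneq N_1$ is handled in the same way using $\mathcal{V}((1))=\emptyset$ and $\mathcal{V}(P_1)=\{P_1\}$, the latter from the maximality of $P_1$ among prime $E_S$-ideals which is forced by the ordering.

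Theorem \ref{Theorem: General quasi-maximal filtrations} then yields the claimed quasimaximal filtration. For the final sentence, Theorem \ref{Theorem: graded annihilators of quotients} applied with $A=N_i$, $B=N_{i+1}$ shows that each graded component of $\grAnn(N_{i+1}/N_i)$ equals $(J_{i+1}:_R J_i)=P_{i+1}$, where the last equality uses primality of $P_{i+1}$ together with $J_i\nsubseteq P_{i+1}$. Since the isomorphism classes of the simple factors of a quasimaximal filtration are invariants of $E_S$ (cf.~section 4 of \cite{Lyubeznik}), the set of annihilators of those factors is $\{P_1,\dots,P_n\}$.
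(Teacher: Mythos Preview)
Your proof is correct and follows essentially the same route as the paper: verify that the chain lies in $\mathcal{A}(E_S)$, check saturation via the combinatorics of the prime $E_S$-ideals, and apply Theorem~\ref{Theorem: General quasi-maximal filtrations}. Your $\mathcal{V}(L)$ bookkeeping is just a repackaging of the paper's argument that any intermediate $E_S$-ideal must be an intersection of some of $P_1,\dots,P_{i}$ or of $P_1,\dots,P_{i+1}$.

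The one organizational difference worth flagging is how membership in $\mathcal{A}(E_S)$ is established. The paper simply notes that each $A_i$ is a graded-annihilator submodule and that the quotient of a $T$-torsion-free module by such a submodule remains $T$-torsion-free. You instead prove the stronger statement that \emph{every} $S[T;f]$-submodule $N\subseteq E_S$ lies in $\mathcal{A}(E_S)$, by comparing $N^{(0)}$ with $(N')^{(0)}$ for $N'$ the preimage of $\Nil(E_S/N)$. Your ``Matlis double annihilation'' step is legitimate here: since $S$ is complete, $E_S^\vee\cong S$, so every submodule $N\subseteq E_S$ has cyclic Matlis dual $S/\Ann_S N$, and dualizing back gives $N=\Ann_{E_S}(\Ann_S N)$. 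This stronger statement also streamlines your saturation check, since any intermediate $S[T;f]$-submodule is then automatically in $\mathcal{A}(E_S)$. Finally, your explicit verification that $N_n=E_S$ and your use of Theorem~\ref{Theorem: graded annihilators of quotients} to compute $(J_{i+1}:_R J_i)=P_{i+1}$ for the last sentence fill in details the paper leaves implicit.
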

\begin{proof}
Notice that the ordering above of $\mathcal{I}=\{P_1, \dots, P_n\}$ can always be achieved:
start with $P_{i_1}, \dots, P_{i_{n_1}}$ maximal with respect to inclusion in $\mathcal{I}$,
then list $P_{i_{n_1}}, \dots, P_{i_{n_2}}$ maximal with respect to inclusion in $\mathcal{I}\setminus \{P_{i_1}, \dots, P_{i_{n_1}}\}$, etc.

Write $A_i=\Ann_{E_S} \bigcap_{j=1}^{i} P_j$ for all $0\leq i \leq n$; notice that our ordering guarantees that these
form a strictly ascending chain.
For all $1\leq i\leq n$,  $A_{i}$ is a graded annihilator submodule of $E_S$;
since $E_S$ is $T$-torsion free, so is $E_S/A_{i}$ and $A_{i}\in \mathcal{A}(E_S)$.

Now any $S[T;f]$-submodule  between $A_{i-1}$ and $A_i$ would be a graded annihilator submodule
of the form $B=\Ann_{E_S} J $ where
$J=P_{j_1} \cap \dots \cap P_{j_m}$ with $i \leq j_1, \dots, j_m \leq n$ is a proper $E_S$-special and
$$ P_1 \cap \dots \cap P_{i-1} \cap P_i \subseteq J  \subseteq P_1 \cap \dots \cap P_{i-1} .$$
The first inclusion above shows that for all $1\leq k\leq m$, $P_{i_k}$ contains one of $P_1, \dots, P_i$ and our ordering then
shows that $i_k\leq i$.
The second inclusion above now shows that
either $J=P_1 \cap \dots \cap P_{i-1} \cap P_i$ or $P_1 \cap \dots \cap P_{i-1}$,
i.e., $B=A_{i-1}$ or $B=A_i$.
We deduce that the factors $A_i/A_{i-1}$ are simple for all $1\leq i\leq n$ and so our chain of of modules in $\mathcal{A}(E_S)$
is saturated. The result now follows from Theorem \ref{Theorem: General quasi-maximal filtrations}.
\end{proof}

The rest of this section will describe quasimaximal filtrations of $E_S$ in the presence of $T$-torsion.

\begin{prop}
For any $E_S$-ideal $J\subseteq R$, and any $e\geq 0$ we have
$$I_{e}(u^{\nu_{e}} J)  \supseteq I_{e+1}(u^{\nu_{e+1}} J) .$$
\end{prop}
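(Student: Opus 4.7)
The plan is to reduce the statement to a single ideal-level inclusion in $R$, namely
$$u^{\nu_{e+1}} J \subseteq (u^{\nu_e} J)^{[p]},$$
and then apply the monotone operator $I_{e+1}$ to both sides. The proof should only use the hypothesis that $J$ is an $E_S$-ideal (through $uJ \subseteq J^{[p]}$), the arithmetic recursion $\nu_{e+1}=1+p\nu_e$, and two formal properties of the $I_e$ operator recorded in section 5 of \cite{Katzman}.

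First I would establish the key inclusion. From the definition $\nu_e=1+p+\dots+p^{e-1}$, one has $\nu_{e+1}=1+p\nu_e$. Multiplying the $E_S$-ideal relation $uJ\subseteq J^{[p]}$ by the principal ideal $(u^{p\nu_e})$ gives
$$u^{\nu_{e+1}} J \;=\; u^{p\nu_e}(uJ) \;\subseteq\; u^{p\nu_e} J^{[p]} \;=\; (u^{\nu_e} J)^{[p]}.$$

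Now I would apply $I_{e+1}$ to both sides and simplify using two facts about the $I$-operation. The first is the composition identity $I_{e+1}=I_e\circ I_1$, which follows from the universal property: $L\subseteq A^{[p^{e+1}]}$ iff $I_1(L)\subseteq A^{[p^e]}$ iff $I_e(I_1(L))\subseteq A$. The second is that $I_1(L^{[p]})=L$ for any ideal $L\subseteq R$, which uses regularity of $R$ (Frobenius flatness forces $L^{[p]}\subseteq A^{[p]}\Rightarrow L\subseteq A$). Combining these,
$$I_{e+1}\bigl((u^{\nu_e} J)^{[p]}\bigr) \;=\; I_e\bigl(I_1\bigl((u^{\nu_e} J)^{[p]}\bigr)\bigr) \;=\; I_e(u^{\nu_e} J),$$
so monotonicity of $I_{e+1}$ yields $I_{e+1}(u^{\nu_{e+1}} J)\subseteq I_e(u^{\nu_e} J)$, as required.

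I do not anticipate a serious obstacle; the argument is entirely formal once the recursion $\nu_{e+1}=1+p\nu_e$ is identified as the right bookkeeping device. The only point that genuinely uses the hypothesis is the step $uJ\subseteq J^{[p]}$ in the displayed inclusion above — without it there is no reason to expect any monotonicity of the sequence $I_e(u^{\nu_e}J)$ in $e$ at all.
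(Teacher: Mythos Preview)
Your proof is correct and follows essentially the same route as the paper: both establish the key inclusion $u^{\nu_{e+1}}J\subseteq (u^{\nu_e}J)^{[p]}$ from $uJ\subseteq J^{[p]}$ and the recursion $\nu_{e+1}=1+p\nu_e$, then apply $I_{e+1}$ and use a formal property of the $I$-operation. The only cosmetic difference is that you derive the equality $I_{e+1}(L^{[p]})=I_e(L)$ via the composition identity $I_{e+1}=I_e\circ I_1$ together with $I_1(L^{[p]})=L$, whereas the paper obtains the (sufficient) inclusion $I_{e+1}(L^{[p]})\subseteq I_e(L)$ directly from the defining minimality of $I_{e+1}$ after noting $I_e(L)^{[p^{e+1}]}\supseteq L^{[p]}$.
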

\begin{proof}
First, $u^{\nu_{e+1}} J = u^{p \nu_{e}} u J \subseteq  u^{p \nu_{e}}  J^{[p]}$, so
$$I_{e+1} \left( u^{\nu_{e+1}} J\right) \subseteq I_{e+1} \left( u^{p\nu_{e}} J^{[p]}\right) .$$
Now
$I_e(u^{\nu_e}J)^{[p^{e+1}]} \supseteq \left(u^{\nu_e} J\right)^{[p]}=u^{p \nu_e} J^{[p]}$ and the minimality of
$I_{e+1} \left( u^{p \nu_e} J^{[p]} \right)$ implies that
$I_{e+1} \left( u^{p \nu_e} J^{[p]} \right) \subseteq I_e(u^{\nu_e}J)$.
\end{proof}

For any $E_S$-ideal $J$ the sequence $\{I_e(u^{\nu_e} J) \}_{e\geq 0}$ is decreasing and we can introduce the following definition.
\begin{defi}
For any $E_S$-ideal $J\subseteq R$ let
$$J^{\sharp u}=\bigcap_{e\geq 0} I_e(u^{\nu_e} J) +I.$$
\end{defi}

\medskip
Notice that $R^\sharp =\bigcap_{e\geq 0} I_e(u^{\nu_e} R) +I$ defines the submodule of nilpotent elements, i.e.,
$\Nil(E_S)=\Ann_{E_S} R^\sharp$ (cf.~Theorem 4.6 in \cite{Katzman}).

\begin{lem}
For any $E_S$-ideal $J\subseteq R$, $J^{\sharp u}$ is an $E_S$-ideal.
\end{lem}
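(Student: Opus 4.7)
The plan is to show that each ideal $L_e := I_e(u^{\nu_e} J)$ appearing in the definition of $J^{\sharp u}$ already satisfies $uL_e \subseteq L_e^{[p]}$, and then to assemble these into the $E_S$-ideal condition on $J^{\sharp u} = \bigcap_{e\geq 0} L_e + I$. Granted the per-$e$ inclusion, one computes
\[
u J^{\sharp u} \subseteq \Bigl(\bigcap_{e\geq 0} uL_e\Bigr) + uI \subseteq \Bigl(\bigcap_{e\geq 0} L_e^{[p]}\Bigr) + I^{[p]} = \Bigl(\bigcap_{e\geq 0} L_e\Bigr)^{[p]} + I^{[p]} = (J^{\sharp u})^{[p]},
\]
where $uI \subseteq I^{[p]}$ because $I$ is itself (trivially) an $E_S$-ideal, and where commutativity of Frobenius power with the arbitrary intersection uses the $\cap$-flatness of $R^{1/p}$ (Proposition 5.3 in \cite{Katzman}). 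Since $J^{\sharp u}$ manifestly contains $I$, this will establish that $J^{\sharp u}$ is an $E_S$-ideal.

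The heart of the argument is therefore to prove $uL_e \subseteq L_e^{[p]}$ for each fixed $e \geq 0$. The plan is to exploit the minimality built into $L_e = I_e(u^{\nu_e} J)$: any ideal $L'$ satisfying $(L')^{[p^e]} \supseteq u^{\nu_e} J$ automatically contains $L_e$. I take $L' = (L_e^{[p]} :_R u)$, noting that the containment $L_e \subseteq (L_e^{[p]} :_R u)$ is exactly the sought-after conclusion $uL_e \subseteq L_e^{[p]}$. It therefore suffices to verify the candidacy condition $\bigl((L_e^{[p]} :_R u)\bigr)^{[p^e]} \supseteq u^{\nu_e} J$. Using the regularity of $R$ in the form of the colon-Frobenius identity $(A :_R u)^{[p^e]} = (A^{[p^e]} :_R u^{p^e})$, this reduces to
\[
u^{p^e}\cdot u^{\nu_e} J \;=\; u^{\nu_{e+1}} J \;\subseteq\; L_e^{[p^{e+1}]},
\]
where I use the arithmetic identity $p^e + \nu_e = \nu_{e+1}$.

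To obtain this last inclusion, raise the defining inclusion $L_e^{[p^e]} \supseteq u^{\nu_e} J$ to the $p$-th Frobenius power, yielding $L_e^{[p^{e+1}]} \supseteq u^{p\nu_e} J^{[p]}$, and then invoke the $E_S$-ideal hypothesis $uJ \subseteq J^{[p]}$ on $J$ to conclude
\[
L_e^{[p^{e+1}]} \;\supseteq\; u^{p\nu_e}\cdot uJ \;=\; u^{p\nu_e + 1} J \;=\; u^{\nu_{e+1}} J,
\]
where the final equality is the recursion $1 + p\nu_e = \nu_{e+1}$. The main obstacle is spotting the candidate $L' = (L_e^{[p]} :_R u)$; once this ansatz is in hand, the exponents of $u$ line up exactly, and the argument is driven solely by the $E_S$-ideal condition on $J$ together with the elementary recursion defining $\nu_e$.
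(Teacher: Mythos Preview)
Your proof is correct, and it differs from the paper's in a genuine way. Writing $L_e = I_e(u^{\nu_e} J)$, the paper establishes the \emph{stronger} inclusion $uL_e \subseteq L_{e+1}^{[p]}$ by first observing $L_{e+1}^{[p]} \supseteq I_e(u^{\nu_{e+1}}J) = I_e(u^{p^e}\cdot u^{\nu_e}J)$ and then proving and applying the general identity $I_e(a^{p^e}B) = a\,I_e(B)$ with $a=u$, $B = u^{\nu_e}J$; the conclusion for $J^{\sharp u}$ then implicitly uses the decreasing-chain property $L_e \supseteq L_{e+1}$ from the preceding Proposition (so that $\bigcap_{e\geq 0} L_e = \bigcap_{e\geq 1} L_e$). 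Your argument instead proves $uL_e \subseteq L_e^{[p]}$ directly, by the colon-ideal ansatz $L' = (L_e^{[p]}:u)$ together with the minimality of $I_e$, the Frobenius--colon identity in a regular ring, and the hypothesis $uJ\subseteq J^{[p]}$. Your route is slightly more self-contained (it needs neither the auxiliary identity nor the decreasing chain), while the paper's route yields the reusable formula $I_e(a^{p^e}B) = a\,I_e(B)$ along the way.
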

\begin{proof}
It is enough to show that for all $e\geq 0$, $u I_e(u^{\nu_e} J) \subseteq I_{e+1}(u^{\nu_{e+1}} J)^{[p]}$.
Now
$$\left(I_{e+1}(u^{\nu_{e+1}} J)^{[p]}\right)^{[p^e]} = I_{e+1}(u^{\nu_{e+1}} J)^{[p^{e+1}]} \supseteq u^{\nu_{e+1}} J$$
so $I_{e+1}(u^{\nu_{e+1}} J)^{[p]}\supseteq I_e(u^{\nu_{e+1}} J)=I_e(u^{p^e} u^{\nu_{e}} J)$
so it is enough to show that for any $a\in R$ and any ideal $B\subseteq R$ we have
$I_e(a^{p^e} B)=a I_e(B)$.

Now $a^{p^e} B \subseteq I_e(a^{p^e} B)^{[p^e]}$ so
$$B\subseteq \left( I_e(a^{p^e} B)^{[p^e]} :_R a^{p^e} \right)= \left( I_e(a^{p^e} B) :_R a \right)^{[p^e]}$$
and so $I_e(B)\subseteq \left( I_e(a^{p^e} B) :_R a \right)$, and, therefore, $a I_e(B)\subseteq  I_e(a^{p^e} B)$.
On the other hand, $a^{p^e} B\subseteq \left(a I_e(B)\right)^{[p^e]}$, so $ I_e(a^{p^e} B) \subseteq a I_e(B)$.
\end{proof}

\begin{thm}\label{Theorem: sharp}
Let $M$ be an $S[T;f]$-submodule of $E_S$ and write $M=\Ann_{E_S} J$ for an $E_S$-ideal $J$.
Then $\Nil(E_S/M)=\Ann_{E_S} J^{\sharp u}/M$.
\end{thm}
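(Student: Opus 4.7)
The plan is to translate $x + M \in \Nil(E_S/M)$ into an ideal-theoretic condition on $\Ann_R(x)$ and match it with $x \in \Ann_{E_S}(J^{\sharp u})$. Since the $S[T;f]$-structure on $E_S$ is $T = u T_R$, hence $T^e = u^{\nu_e} T_R^e$, the condition $x + M \in \Nil(E_S/M)$ unravels to $J u^{\nu_e}\, T_R^e x = 0$ in $E_R$ for some $e \geq 0$, equivalently $J u^{\nu_e} \subseteq \Ann_R(T_R^e x)$ for some $e$.

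The conceptual crux is the identity
\[
\Ann_R(T_R^e y) = \Ann_R(y)^{[p^e]} \qquad (y \in E_R).
\]
To establish this, observe that the canonical $R$-linear map $\alpha \colon F_R^e(E_R) \to E_R$, $r \otimes z \mapsto r T_R^e z$, is Matlis-dual to the identity $R \to F_R^e(R) = R$ (this is the $u = 1$ instance of Proposition~4.5 in \cite{Katzman}), so $\alpha$ is injective. Since $F_R^e$ is exact on $R$-modules, restricting $\alpha$ to the cyclic submodule $F_R^e(Ry) \cong R/\Ann_R(y)^{[p^e]}$ identifies $R \cdot T_R^e y$ with $R/\Ann_R(y)^{[p^e]}$, yielding the identity.

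Combining this identity with the defining property of $I_e$ (namely $L \subseteq K^{[p^e]}$ iff $I_e(L) \subseteq K$), one obtains $J u^{\nu_e} \subseteq \Ann_R(x)^{[p^e]}$ iff $I_e(u^{\nu_e} J) \subseteq \Ann_R(x)$ iff $I_e(u^{\nu_e} J) \cdot x = 0$. Hence $x + M \in \Nil(E_S/M)$ iff $I_e(u^{\nu_e} J) \cdot x = 0$ for some $e$, equivalently $x \in \bigcup_e \Ann_{E_S}(I_e(u^{\nu_e} J) + I)$.

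To identify this union with $\Ann_{E_S}(J^{\sharp u})$, I would invoke Lyubeznik's Proposition~4.4 on the Artinian $S[T;f]$-module $E_S/M$: there is an index of nilpotency $\eta$ with $T^\eta \Nil(E_S/M) = 0$, so the ascending chain $\{\Ann_{E_S}(I_e(u^{\nu_e} J) + I)\}$ stabilizes at $\eta$, equivalently the descending chain of $E_S$-ideals $\{I_e(u^{\nu_e} J) + I\}$ stabilizes to $I_\eta(u^{\nu_\eta} J) + I$. Using the Frobenius-like relation $u I_e(u^{\nu_e} J) \subseteq I_{e+1}(u^{\nu_{e+1}} J)^{[p]}$ (from the preceding lemma), one identifies $J^{\sharp u} = \bigcap_e I_e(u^{\nu_e} J) + I$ with this stabilized value $I_\eta(u^{\nu_\eta} J) + I$, and the conclusion follows by Matlis duality. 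The main obstacle is establishing the key identity $\Ann_R(T_R^e y) = \Ann_R(y)^{[p^e]}$; once it is in hand, the rest is careful bookkeeping with $I_e$, the stabilization of the chain $\{I_e(u^{\nu_e} J) + I\}$, and Matlis duality.
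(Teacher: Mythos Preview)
Your approach is correct and takes a more elementary route than the paper's. The paper proves the theorem by applying the duality functors $\Delta^e$ and $\Psi^e$ to the short exact sequence $0 \to M \to E_S \to E_S/M \to 0$, obtaining commutative diagrams in $\mathcal{D}^e$ with vertical maps given by multiplication by $u^{\nu_e}$; it then identifies the kernel $N_e = \{z \in E_S/M : T^e z = 0\}$ with $\Ann_{E_S}(J_e)/M$, where $J_e = I_e(u^{\nu_e} J) + I$, via a minimality argument inside $\mathcal{D}^e$, and concludes by taking the union over $e$. Your argument bypasses this categorical machinery by working elementwise: the identity $\Ann_R(T_R^e y) = \Ann_R(y)^{[p^e]}$ (which you correctly deduce from flatness of Frobenius over the regular ring $R$ together with the injectivity of $F_R^e(E_R) \to E_R$) lets you translate $T^e x \in M$ directly into $I_e(u^{\nu_e} J) \subseteq \Ann_R(x)$, reaching the same description of $N_e$. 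Both proofs ultimately rest on the same two ingredients (flatness of Frobenius and Matlis duality), but yours unpacks them at the level of single elements rather than through $\Delta^e$ and $\Psi^e$, which makes it more self-contained at the cost of reproving a special case of what those functors encode. One minor point: your invocation of the relation $u\, I_e(u^{\nu_e} J) \subseteq I_{e+1}(u^{\nu_{e+1}} J)^{[p]}$ is not needed for the final identification --- once the ascending chain $\{\Ann_{E_S} J_e\}$ stabilizes (by the Artinian property of $E_S$), Matlis duality forces the descending chain $\{J_e\}$ to stabilize as well, and $J^{\sharp u}$ is then automatically the stable value.
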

\begin{proof}
Let $N_e$ be the $S[T;f]$-submodule of $E_S/M$ consisting of all elements killed by $T^e$.

An application of the functor $\Delta^e$ (cf.~section 4 in \cite{Katzman}) to the short exact sequence
$0 \rightarrow M \rightarrow E_S \rightarrow E_S/M \rightarrow 0$
yields the following short exact sequence in  $\mathcal{D}^e$
\begin{equation}\label{CD1}
\xymatrix{
0 \ar@{>}[r]^{} &
\displaystyle\frac{J}{I} \ar@{>}[r]^{} \ar@{>}[d]^{u^{\nu_e}} &
\displaystyle\frac{R}{I} \ar@{->}[r]^{} \ar@{>}[d]^{u^{\nu_e}} &
\displaystyle\frac{R}{J} \ar@{>}[r]^{} \ar@{>}[d]^{u^{\nu_e}}  &
0 \\
0 \ar@{>}[r]^{} &
\displaystyle \displaystyle\frac{J^{[p^e]}}{I^{[p^e]}}  \ar@{>}[r]^{} &
\displaystyle \displaystyle\frac{R}{I^{[p^e]}}        \ar@{>}[r]^{} &
\displaystyle \displaystyle\frac{R}{J^{[p^e]}}        \ar@{>}[r]^{} &
0 \\
}.
\end{equation}
Write $J_e=I_e(u^{\nu_e} J)+I$ and
consider the following exact sequence in $\mathcal{D}^e$
\begin{equation}\label{CD2}
\xymatrix{
\displaystyle\frac{J}{I} \ar@{>}[r]^{} \ar@{>}[d]^{u^{\nu_e}} &
\displaystyle\frac{J}{J_e} \ar@{->}[r]^{} \ar@{>}[d]^{u^{\nu_e}} &
0 \\
\displaystyle \displaystyle\frac{J^{[p^e]}}{I^{[p^e]}}  \ar@{>}[r]^{} &
\displaystyle \displaystyle\frac{J^{[p^e]}}{J_e^{[p^e]}}        \ar@{>}[r]^{} &
0 \\
}.
\end{equation}
Write $N_e^\prime=\Psi^e\left( \frac{J}{J_e} \xrightarrow[]{u^{\nu_e}} \frac{J^{[p^e]}}{{J_e^{[p^e]}}} \right)$
and note that it is an $S[T;f]$-submodule of $E_S/M$.
The definition of $J_e$ implies that the rightmost map in (\ref{CD2}) is zero, hence
$T^e N_e^\prime=0$ so $ N_e^\prime\subseteq  N_e$.
On the other hand, an application of $\Delta^e$ to the exact sequence $0 \rightarrow N_e \rightarrow E_S/M$
yields an exact sequence in $\mathcal{D}^e$
\begin{equation}\label{CD3}
\xymatrix{
\displaystyle\frac{J}{I} \ar@{>}[r]^{} \ar@{>}[d]^{u^{\nu_e}} &
\displaystyle\frac{J}{L_e} \ar@{->}[r]^{} \ar@{>}[d]^{u^{\nu_e}} &
0 \\
\displaystyle \displaystyle\frac{J^{[p^e]}}{I^{[p^e]}}  \ar@{>}[r]^{} &
\displaystyle \displaystyle\frac{J^{[p^e]}}{{L_e}^{[p^e]}}        \ar@{>}[r]^{} &
0 \\
}.
\end{equation}
for some $E_S$-ideal $L_e$ such that $I\subseteq L_e \subseteq J$ and for which $u^{\nu_e} J\subseteq L_e^{[p^e]}$. Now the minimality of
$J_e=I_e(u^{\nu_e} J)+I$ implies that $I_e(u^{\nu_e} J)\subseteq L_e$ and hence
$$N_e=\left(\frac{J}{L_e}\right)^\vee =\frac{\Ann_E L_e}{M}\subseteq \frac{\Ann_E J_e}{M}=N_e^\prime$$
and we deduce that $N_e=N_e^\prime$.

We now conclude the proof by observing that
$$
\Nil(E_S/M)=\bigcup_{e\geq 0} N_e=\bigcup_{e\geq 0} N_e^\prime= \bigcup_{e\geq 0}  \frac{\Ann_{E_S} J_e}{M}=
\frac{ \Ann_{E_S} \bigcap_{e\geq 0}  J_e}{M}=\frac{\Ann_{E_S} J^{\sharp u}}{M} .$$
\end{proof}

\begin{cor}
For any $E_S$-ideal $J\subseteq R$,
$E_S/\Ann_{E_S} J^{\sharp u}$ is $T$-torsion free and
$\left( J^{\sharp u}\right)^{\sharp u}= J^{\sharp u}$.
\end{cor}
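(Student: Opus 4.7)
The plan is to derive both statements directly from Theorem \ref{Theorem: sharp}, applied twice, together with the elementary fact that for any $S[T;f]$-module $N$ the quotient $N/\Nil(N)$ is automatically $T$-torsion free.

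First I would verify that elementary fact. If $y+\Nil(N)\in N/\Nil(N)$ and $T^k y\in \Nil(N)$, then $T^{k+j}y=0$ for some $j\geq 0$, hence $y\in \Nil(N)$, so $y+\Nil(N)=0$. Applying this with $N=E_S/\Ann_{E_S} J$ and using Theorem \ref{Theorem: sharp}, which identifies
$$\Nil\!\left(\frac{E_S}{\Ann_{E_S} J}\right)=\frac{\Ann_{E_S} J^{\sharp u}}{\Ann_{E_S} J},$$
one obtains
$$\frac{E_S}{\Ann_{E_S} J^{\sharp u}}\cong \frac{E_S/\Ann_{E_S} J}{\Nil(E_S/\Ann_{E_S} J)},$$
which is $T$-torsion free. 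This proves the first assertion.

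Next, since $J^{\sharp u}$ is itself an $E_S$-ideal (by the lemma preceding Theorem \ref{Theorem: sharp}) and contains $I$, I can apply Theorem \ref{Theorem: sharp} again with $J$ replaced by $J^{\sharp u}$ to obtain
$$\Nil\!\left(\frac{E_S}{\Ann_{E_S} J^{\sharp u}}\right)=\frac{\Ann_{E_S} (J^{\sharp u})^{\sharp u}}{\Ann_{E_S} J^{\sharp u}}.$$
The first part of the corollary says the left-hand side is zero, so $\Ann_{E_S} (J^{\sharp u})^{\sharp u}=\Ann_{E_S} J^{\sharp u}$.

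Finally I would invoke Matlis duality: since both $J^{\sharp u}$ and $(J^{\sharp u})^{\sharp u}$ are ideals of $R$ containing $I$, they are uniquely determined by their annihilators in $E_S$, and therefore $(J^{\sharp u})^{\sharp u}=J^{\sharp u}$. No step here is genuinely hard; the only thing to keep straight is that $J^{\sharp u}$ contains $I$ by construction, so the second application of Theorem \ref{Theorem: sharp} is legitimate and the Matlis-duality conclusion is unambiguous.
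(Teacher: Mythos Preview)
Your proposal is correct and matches the paper's intended argument: the corollary is stated without proof immediately after Theorem \ref{Theorem: sharp}, and your derivation---applying that theorem once to get $T$-torsion freeness of the quotient, then a second time with $J^{\sharp u}$ in place of $J$ and using Matlis duality to recover the ideal equality---is exactly the reasoning the paper leaves implicit.
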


\begin{defi}
We define
$$\mathcal{I}^\sharp =\left\{ J^{\sharp u} \,|\, J\subseteq R  \text{ is an } E_S \text{ ideal} \right\} $$
and call a chain $J^{\sharp u}_0 \subset J^{\sharp u}_1 \subset \dots \subset J^{\sharp u}_\ell$ of ideals in $\mathcal{I}^\sharp$
\emph{$\sharp$-saturated} if it cannot be refined by adding an ideal in $\mathcal{I}^\sharp$.
\end{defi}

\begin{thm}\hfil
Let
$I=0^{\sharp u}=J^{\sharp u}_0 \subset J^{\sharp u}_1 \subset \dots \subset J^{\sharp u}_\ell=R^{\sharp u}$ be a
$\sharp$-saturated chain.
Then
$$0\subset J^{\sharp u}_{\ell-1} \subset \dots \subset \Ann_{E_S} J^{\sharp u}_1 \subset \Ann_{E_S} J^{\sharp u}_0=E_S$$
is a quasi-maximal filtration of $E_S$.
\end{thm}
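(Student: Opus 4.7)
The plan is to apply Theorem~\ref{Theorem: General quasi-maximal filtrations} to $M=E_S$ with the chain obtained by taking annihilators in $E_S$ of the given $\sharp$-saturated chain in reverse order. Concretely, I set $N_i=\Ann_{E_S} J^{\sharp u}_{\ell-i+1}$ for $i=1,\dots,\ell+1$, so that $N_1=\Ann_{E_S} R^{\sharp u}=\Nil(E_S)$ and $N_{\ell+1}=\Ann_{E_S} I=E_S$. I then need to verify the three hypotheses of that theorem: each $N_i$ lies in $\mathcal{A}(E_S)$, $N_1$ is the minimal element of $\mathcal{A}(E_S)$, and no element of $\mathcal{A}(E_S)$ lies strictly between consecutive $N_i$'s.

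The technical heart is a bijective dictionary between $\mathcal{I}^\sharp$ and $\mathcal{A}(E_S)$. For any $E_S$-ideal $K$, Theorem~\ref{Theorem: sharp} gives
$$\Nil(E_S/\Ann_{E_S} K)=\Ann_{E_S} K^{\sharp u}/\Ann_{E_S} K,$$
which vanishes precisely when $\Ann_{E_S} K=\Ann_{E_S} K^{\sharp u}$, and via Matlis duality precisely when $K=K^{\sharp u}\in\mathcal{I}^\sharp$. Since every $S[T;f]$-submodule of $E_S$ has the form $\Ann_{E_S} K$ for a unique $E_S$-ideal $K$, the assignment $K\mapsto\Ann_{E_S} K$ is an order-reversing bijection $\mathcal{I}^\sharp\to\mathcal{A}(E_S)$.

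With this dictionary, each verification is short. Each $N_i$ lies in $\mathcal{A}(E_S)$ because $J^{\sharp u}_{\ell-i+1}\in\mathcal{I}^\sharp$. The minimality of $N_1=\Nil(E_S)$ in $\mathcal{A}(E_S)$ is immediate: any $A\in\mathcal{A}(E_S)$ satisfies $\Nil(E_S/A)=0$ and therefore contains every $T$-nilpotent element, hence contains $\Nil(E_S)$. Saturation between $N_i$ and $N_{i+1}$ translates via the order-reversing bijection into the non-existence of $K\in\mathcal{I}^\sharp$ with $J^{\sharp u}_{\ell-i}\subsetneq K\subsetneq J^{\sharp u}_{\ell-i+1}$, which is precisely the $\sharp$-saturation hypothesis.

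Theorem~\ref{Theorem: General quasi-maximal filtrations} then produces the quasimaximal filtration of $E_S$ claimed in the displayed statement (prepending $0$ when $\Nil(E_S)\neq 0$ and starting directly at $N_1=0$ otherwise). There is no genuine obstacle; the only care required is in installing the dictionary between $\mathcal{I}^\sharp$ and $\mathcal{A}(E_S)$ through Theorem~\ref{Theorem: sharp} and in checking that the notion of $\sharp$-saturation matches, under order reversal, the saturation condition demanded by Theorem~\ref{Theorem: General quasi-maximal filtrations}.
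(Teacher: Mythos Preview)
Your proposal is correct and follows essentially the same route as the paper: both establish the order-reversing bijection $\mathcal{A}(E_S)=\{\Ann_{E_S} J\,:\,J\in\mathcal{I}^\sharp\}$ via Theorem~\ref{Theorem: sharp} and then invoke Theorem~\ref{Theorem: General quasi-maximal filtrations}. You in fact supply more detail than the paper does---explicitly verifying the minimality of $\Nil(E_S)$ in $\mathcal{A}(E_S)$ and checking that $\sharp$-saturation translates to saturation in $\mathcal{A}(E_S)$---whereas the paper simply asserts the bijection and appeals to the invariance of quasilength (Theorem~4.6 in \cite{Lyubeznik}) to note that saturated chains exist.
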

\begin{proof}
Notice that $\mathcal{A}(E_S)= \{ \Ann_{E_S} J \,|\, J\in \mathcal{I}^\sharp \}$.
Any finite strictly ascending chain in $\mathcal{A}(E_S)$ can be refined to saturated chain and all these have the same length,
namely the quasilength of $E_S$ (cf.~Theorem 4.6 in \cite{Lyubeznik}).
So finite saturated chains as in the statement of the theorem do exist and now the theorem follows
from Theorem \ref{Theorem: General quasi-maximal filtrations}.
\end{proof}

The ideals $J^{\sharp u}$ will play a central role in calculating tight closure in $E_S$ as described in the following section.

\section{Tight closure in $E_S$}\label{Tight closure in $E_S$}
In this section we give an explicit
\footnote{An ``explicit'' description is interpreted in this section as one given as the annihilator in $E_S$ of an explicitly given ideal.
This is justified in view of Matlis duality and in view of the fact that in the context of the theory of tight closure one is usually interested
in the annihilators of submodules of $E_S$ rather than in the submodules themselves.}
description of the tight closure of certain submodules of $E_S$, including $0^*_{E_S}$,
which holds whenever the $S$-algebra $\mathcal{F}(E_S)$ is generated by one element.
The class of complete local rings $S$ with this property includes those which are quasi-Gorenstein, but it is strictly larger than this
as is illustrated by the example at the end of this section.

We shall henceforth use the natural isomorphism $\mathcal{F}^e(M)\cong \Hom_S \left( ST^e \otimes_S M , M\right)$
which maps a $\phi\in  \mathcal{F}^e(M)$ to the $S$-linear map
$\widetilde{\phi} : ST^e \otimes M \rightarrow M$ determined by $\widetilde{\phi}(s\otimes m)=s\phi(m)$
(cf.~ section 3 in \cite{Lyubeznik-Smith}). Conversely, the element $\widetilde{\phi} : ST^e \otimes M \rightarrow M$
corresponds under this isomorphism to the map  $\phi\in\mathcal{F}^e(M)$ given by
$\phi(m)=\widetilde{\phi}(1\otimes m)$.
We shall henceforth identify these two $S$-modules using this notation.

\bigskip
We can think of the tight closure of ideals $L\subseteq S$ as the set of all elements $s\in S$ such that
for some $c\in S$ not in any minimal prime we have
$c \phi(s) \in S \phi(L)$ for all $e\gg 0$ and all $\phi\in \mathcal{F}^e(S)$.
This is because for each $e\geq 0$, $\mathcal{F}^e(S)$ is generated by the $e^\text{th}$ iterated Frobenius map on $S$.
Our first aim is to show that this also yields the tight closure of submodules of $E_S$, and to do so we  shall need
weak test elements for testing tight closure in this setup.
\begin{defi}
Let $M$ be an $S$-module and let $N\subseteq M$ be an $S$-submodule.
We call $c\in S$ not in any minimal prime a $p^\eta$-weak test element for the pair $(N,M)$
if $a\in N^*_M$ if an only if $c\otimes a\in S T^e\otimes M $ is in the image of
$S T^e\otimes N$ in $S T^e\otimes M$ for all $e\geq \eta$.
Henceforth $N^{[p^e]}_M$ (or just $N^{[p^e]}$ when it will not lead to confusion) will denote the image of $S T^e\otimes N$ in $S T^e\otimes M$.
\end{defi}
These test elements mentioned in the definition above are known to exist when $S$ is $F$-pure (cf.~ section 3 in \cite{Sharp2}),
and I believe they exist in much wider generality.

\begin{prop}\label{Proposition: tight closure as intersection of special tight closure}
Let $N$ be any $S$-submodule of $E_S$, let $c\in S$ and fix an $a\in M$.
For all $e\geq 0$,
$c\otimes a\in ST^e \otimes E_S$ is in  $N^{[p^e]}$ if and only if
for all $\phi\in \mathcal{F}^e(E_S)$ we have $c \phi(a) \in S\phi(N)$.

Consequently, if $c$ is a $p^\eta$-weak test element for the pair $(N,E_S)$ then
$a\in N^*_{E_S}$ if and only if for all $e\geq \eta$ and all $\phi\in \mathcal{F}^e(E_S)$ we have $c \phi(a) \in S\phi(N)$.
\end{prop}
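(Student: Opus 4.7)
The plan is to exploit the isomorphism $\mathcal{F}^e(E_S)\cong \Hom_S(ST^e\otimes_S E_S, E_S)$ recalled just before the proposition, so that each $\phi\in \mathcal{F}^e(E_S)$ corresponds to an $S$-linear map $\widetilde{\phi}: F^e_S(E_S)\to E_S$ satisfying $\widetilde{\phi}(sT^e\otimes m)=s\phi(m)$. Under this correspondence $\widetilde{\phi}(c\otimes a)=c\phi(a)$ and $\widetilde{\phi}(N^{[p^e]})=S\phi(N)$, which converts the claim into a statement about maps from $F^e_S(E_S)/N^{[p^e]}$ into $E_S$.

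For the forward direction I would just unwind the definitions: if $c\otimes a\in N^{[p^e]}$ then $c\otimes a=\sum s_i T^e\otimes n_i$ with $n_i\in N$, and applying $\widetilde{\phi}$ yields $c\phi(a)=\sum s_i\phi(n_i)\in S\phi(N)$. This half is essentially a one-line calculation.

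The real content is the converse, which I would handle by invoking that $E_S$ is an injective cogenerator of the category of $S$-modules (since $S$ is complete local and $E_S$ is the injective hull of its residue field; for any nonzero module there is a nonzero map into $E_S$). Setting $\bar{x}$ equal to the image of $c\otimes a$ in $Q:=F^e_S(E_S)/N^{[p^e]}$, it suffices to show $\psi(\bar{x})=0$ for every $S$-linear $\psi:Q\to E_S$. Any such $\psi$ lifts to an $S$-linear $\widetilde{\phi}:F^e_S(E_S)\to E_S$ that annihilates $N^{[p^e]}$; the corresponding $\phi\in\mathcal{F}^e(E_S)$ then satisfies $S\phi(N)=0$, so the hypothesis forces $c\phi(a)\in S\phi(N)=0$, whence $\psi(\bar{x})=\widetilde{\phi}(c\otimes a)=c\phi(a)=0$. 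The cogenerator property then gives $\bar{x}=0$.

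The consequence for tight closure is then immediate: by definition of $p^\eta$-weak test element, $a\in N^*_{E_S}$ is equivalent to $c\otimes a\in N^{[p^e]}$ for all $e\geq \eta$, which by the first part translates into the stated condition on the Frobenius maps. I expect the main obstacle to be only verifying the cogenerator argument carefully, since the rest is a straightforward unpacking of the identification $\mathcal{F}^e(E_S)\cong \Hom_S(F^e_S(E_S),E_S)$; one should also make sure the hypothesis is applied not just to $\phi$'s arising from lifts of maps $Q\to E_S$, but this is automatic because those $\phi$'s form a subset of all $\phi\in\mathcal{F}^e(E_S)$.
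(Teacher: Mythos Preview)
Your proposal is correct and follows essentially the same route as the paper: both use the identification $\mathcal{F}^e(E_S)\cong \Hom_S(ST^e\otimes_S E_S,E_S)$, handle the forward direction by direct computation, and prove the converse by invoking that $E_S$ is an injective cogenerator to detect the nonvanishing of $c\otimes a$ modulo $N^{[p^e]}$. The only cosmetic difference is that the paper first restricts to the submodule $N^{[p^e]}+S(c\otimes a)$ and then extends the separating map to all of $ST^e\otimes_S E_S$ by injectivity, whereas you work directly with maps out of the full quotient $F^e_S(E_S)/N^{[p^e]}$; these are equivalent packagings of the same argument.
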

\begin{proof}
Note that for all $\phi\in \mathcal{F}^e(E_S)$ we have $\widetilde{\phi}(N^{[p^e]})=S \phi(N)$.

Assume first that  $c\otimes a\in N^{[p^e]}$.
Now for all  $\phi\in \mathcal{F}^e(E_S)$ we have
$$c\phi(a)=\widetilde{\phi}(c\otimes a)\subseteq \widetilde{\phi}\left( N^{[p^e]}\right)= S\phi(N) .$$

Assume now that $c \phi(a) \in S\phi(N)$ for all
$\phi\in \mathcal{F}^e(E_S)$.
Let $M\subseteq ST^e\otimes_S E_S$ be the $S$-submodule generated by  $N^{[p^e]}$ and $c\otimes a$.
The inclusion above yields a surjection $\Hom_S( ST^e\otimes_S E_S, E_S) \twoheadrightarrow \Hom_S(M, E_S)$;
we now recall that $\mathcal{F}^e(E_S) =\Hom_S(ST^e\otimes_S E_S, E_S)$ and deduce that for all
$\widetilde{\phi}\in \Hom_S( ST^e\otimes_S E_S, E_S)$ we have
$\widetilde{\phi}(M) = \widetilde{\phi}(N^{[p^e]}+S(c\otimes a))= S\phi(N)=\widetilde{\phi}(N^{[p^e]})$.

If $c\otimes a\notin N^{[p^e]}$ we can find a non-zero
$\overline{\psi} \in \Hom_S ( M/ N^{[p^e]} , E_S)$.
The short exact sequence
$$ 0 \rightarrow \Hom_S ( \frac{M}{ N^{[p^e]}} , E_S) \rightarrow
\Hom_S ( M , E_S) \rightarrow
\Hom_S ( N^{[p^e]} , E_S) \rightarrow 0$$
enables us to identify $\overline{\psi}$ with a
non-zero $\psi \in \Hom_S ( M , E_S)$ for which $\psi(N^{[p^e]})=0$.
Since $E_S$ is injective, we can extend $\psi$ to an element $\widetilde{\psi} \in \Hom_S (ST^e\otimes_S E_S , E_S)$.
Now $\widetilde{\psi}(c\otimes m)\neq 0$, otherwise $\overline{\psi}=0$,
and hence $\widetilde{\psi}(M)\neq 0$ and so is not equal to $\widetilde{\psi}(N^{[p^e]})=0$,  contradicting the conclusion of the previous paragraph.

The final conclusion follows directly from the definition of  weak test elements.
\end{proof}

\bigskip
The proposition above gives a method for translating the calculation of the tight closure of an $S$-submodule  $N\subseteq E_S$ to a calculation involving
ideals of $R$ as follows. Assume $c\in S$ be a $p^\eta$-weak test element for the pair $(N,E_S)$.
Fix an $e\geq \eta$,  $\phi\in \mathcal{F}^e(E_S)$ and the corresponding $S[\Theta; f^e]$-module structure on $E_S$
corresponding to $v\in(I^{[p^e]} : I)$.
Define $N_\phi=\{ m\in E_S \,|\, c\Theta m \in S\Theta N\}$ and write $N_\phi=\Ann_{E_S} L_\phi$ for some ideal $L_\phi\subseteq R$.
Notice that $N_\phi$ is the largest submodule of $E_S$ with the property
$c S\Theta N_\phi \subseteq S\Theta N$, i.e.,
$c \Ann_{E_S} ( 0 :_R S\Theta N_\phi) \subseteq \Ann_{E_S}  ( 0 :_R S\Theta N)$
which, using the  $S[\Theta; f^e]$-module analogue of Theorem \ref{Theorem: graded annihilators}(b), translates to
$c \Ann_{E_S} (L_\phi^{[p^e]} : v)  \subseteq \Ann_{E_S} (J^{[p^e]} : v)$,
or, equivalently,
$(L_\phi^{[p^e]} : v)  \supseteq c (J^{[p^e]} : v)$, i.e.,
$L_\phi^{[p^e]}  \supseteq c v (J^{[p^e]} : v)$.
We deduce that $L_\phi$ is the minimal ideal $L\subseteq R$ containing $I$
for which $L^{[p^e]}  \supseteq c v (J^{[p^e]} : v)$, i.e.,
$L_\phi= I_e \left( c v (J^{[p^e]} : v) \right)+ I $.
We can now express $N^*_{E_S}$ as the annihilator in $E_S$ of the sum of all these ideals $L_\phi$.

In some simple cases this gives directly a fairly explicit expression for the tight closure on $N$.
For example, if $I$ is generated by a regular sequence $g_1, \dots, g_m$ and $N=0$, then
for all $e\geq 0$ we have $(I^{[p^e]} : I)=g^{p^e-1}+I^{[p^e]} $ where $g=g_1 \cdot \ldots \cdot g_m$ and,
if $c$ is a test element for $(0,E_S)$, then
$$0^*_{E_S}=\Ann_{E_S} \sum_{e\geq 0} I_e(c g^{p^e-1}) + I.$$

The rest of this section applies Proposition \ref{Proposition: tight closure as intersection of special tight closure} under
additional hypothesis to produce
explicit expressions for $N^*_{E_S}$: we shall first restrict our attention to $\mathcal{F}(E_S)$ submodules $N\subseteq E_S$ (which includes the interesting case
where $N=0$) and later
we shall impose the additional condition that the $S$-algebra $\mathcal{F}(E_S)$ is generated by one element.

\begin{prop}\label{Proposition: special tight closure}
Fix any $S[\Theta;f^\eta]$-module structure on $E_S$.
Let $c\in R$ and let $Z=\Ann_{E_S} J$ be an $S[\Theta;f^\eta]$-submodule, where $I\subseteq J \subseteq R$ is an ideal.
Let $Y=\Ann_{E_S} L$ be the largest $S[\Theta;f^\eta]$-submodule of $E_S$ contained in $\Ann_{E_S} c J$
where $cJ \subseteq L \subseteq R$ is an ideal.
Choose a positive integer $j_0$  such that
$\Theta^{j_0} \Nil(E_S/Y)=0$.
Write
$$M=\left\{ m\in E_S \,|\, c \Theta^j m \in Z\text{ for all } j\geq j_0\right\} .$$
Then $M$ is the preimage in $E_S$ of $\Nil \left( E_S / Y\right)$
\end{prop}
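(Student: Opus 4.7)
The plan is to unpack both descriptions of $M$ and of the preimage of $\Nil(E_S/Y)$ and show each contains the other. Let $N'$ denote the preimage in $E_S$ of $\Nil(E_S/Y)$. Since $\Theta^{j_0}$ kills $\Nil(E_S/Y)$ by hypothesis, an element $m \in E_S$ lies in $N'$ if and only if $\Theta^{j_0} m \in Y$, and (because $Y$ is an $S[\Theta;f^\eta]$-submodule) this is equivalent to $\Theta^j m \in Y$ for all $j \geq j_0$. So the target equality is
\[
\{m \in E_S \,:\, \Theta^j m \in Y \text{ for all } j \geq j_0 \} \;=\; \{m \in E_S \,:\, c\Theta^j m \in Z \text{ for all } j \geq j_0\}.
\]

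For the inclusion $N' \subseteq M$, first observe that from $Y \subseteq \Ann_{E_S} cJ$ one gets $cJ \cdot Y = 0$, i.e., $cY \subseteq \Ann_{E_S} J = Z$. Hence if $\Theta^j m \in Y$ for all $j \geq j_0$, then $c\Theta^j m \in cY \subseteq Z$ for all such $j$, giving $m \in M$.

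For the converse $M \subseteq N'$, suppose $c\Theta^j m \in Z = \Ann_{E_S} J$ for all $j \geq j_0$. Then $cJ \cdot \Theta^j m = J \cdot (c\Theta^j m) = 0$, so $\Theta^j m \in \Ann_{E_S} cJ$ for every $j \geq j_0$. Now consider the $S[\Theta;f^\eta]$-submodule of $E_S$ generated by $\Theta^{j_0} m$; its typical element is an $S$-linear combination of terms $\Theta^{k}(\Theta^{j_0} m) = \Theta^{j_0+k} m$ with $k \geq 0$, each of which lies in $\Ann_{E_S} cJ$, and since $\Ann_{E_S} cJ$ is an $S$-submodule, the whole cyclic submodule $S[\Theta;f^\eta]\cdot \Theta^{j_0} m$ sits inside $\Ann_{E_S} cJ$. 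By the defining maximality of $Y$ among $S[\Theta;f^\eta]$-submodules contained in $\Ann_{E_S} cJ$, this cyclic submodule lies in $Y$; in particular $\Theta^{j_0} m \in Y$, so $m \in N'$.

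There is no real obstacle here: the argument is essentially a bookkeeping exercise, and the only nontrivial move is invoking the maximality of $Y$ applied to the cyclic $S[\Theta;f^\eta]$-submodule generated by $\Theta^{j_0}m$, which is what forces $\Theta^{j_0}m$ into $Y$ from the apparently weaker condition $c\Theta^j m \in Z$. The role of $j_0$ is purely to package ``eventually'' into ``for all $j \geq j_0$'' using that $\Theta^{j_0}$ annihilates $\Nil(E_S/Y)$.
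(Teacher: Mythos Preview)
Your proof is correct. The first inclusion $N'\subseteq M$ is identical to the paper's. For the harder inclusion $M\subseteq N'$, however, you take a more elementary route than the paper. The paper first observes that $M$ itself is an $S[\Theta;f^\eta]$-submodule, forms the ideal $A=(0:_R S\Theta^{j_0}M)=M^{(j_0)}$, and then invokes Theorem~\ref{Theorem: graded annihilators}(d) to conclude that $\Ann_{E_S}A$ is an $S[\Theta;f^\eta]$-submodule; since $cJ\subseteq A$ one gets $\Ann_{E_S}A\subseteq\Ann_{E_S}cJ$, hence $\Ann_{E_S}A\subseteq Y$, and then $\Theta^{j_0}m\in S\Theta^{j_0}M\subseteq\Ann_{E_S}A\subseteq Y$. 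You bypass the annihilator ideal entirely by noting that the cyclic $S[\Theta;f^\eta]$-submodule generated by $\Theta^{j_0}m$ already sits inside $\Ann_{E_S}cJ$, so maximality of $Y$ applies directly. This avoids the dependence on Theorem~\ref{Theorem: graded annihilators}(d) and the Matlis-duality machinery behind it, at the cost of working one element at a time rather than with all of $M$ at once; for the purposes of this proposition your argument is the cleaner one.
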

\begin{proof}
Clearly, if $m+ Y \in\Nil \left( E_S / Y\right)$ then for all
$j\geq j_0$ we have $L \Theta^j m=0$ and since $L\supseteq cJ$ we also have $c \Theta^j m \in \Ann_{E_S} J=Z$.

Notice that $M$ is an $S[\Theta;f^\eta]$-submodule of $E_S$.
Write $A=(0 :_R S\Theta^{j_0} M)$;
Theorem \ref{Theorem: graded annihilators}(d) shows that $\Ann_{E_S} A$ is an $S[\Theta;f^\eta]$-submodule of $E_S$.
Furthermore,
$c S \Theta^{j_0} M \subseteq \Ann_{E_S} J$, i.e., $c J \Theta^{j_0} M=0$ and hence $cJ \subseteq A$
implying $L\subseteq A$.
Since
$S \Theta^{{j_0}} M\subseteq  \Ann_{E_S} A$ we have
$m+ \Ann_{E_S} A \in \Nil\left(E_S/\Ann_{E_S} A\right)$
for all $m\in M$;
as $L\subseteq A$
we also have
$m + \Ann_{E_S} L\in \Nil\left(E_S/\Ann_{E_S} L\right)$.
\end{proof}

Our next goal is produce an explicit method of calculating tight closure in $E_S$. The following introduces the main
tool.

\begin{defi}
Let $e\geq 0$, fix any $u\in(I^{[p^e]} : I)$ and let  $J\subseteq R$ be any ideal containing $I$.

We write  $J^{\star^e u}$ for the smallest ideal  $L$ containing $J$
for which $u L \subseteq L^{[p^e]}$ (see section 5 in \cite{Katzman} for a construction of this ideal).

Endow $E_S$ with the structure of an  $S[\Theta; f^e]$-module corresponding to $u$, and let
$M$ be an $S$-submodule of $E_S$.
We define $M^{\star^e}$ to be the largest $S[\Theta; f^e]$-submodule of $E_S$ contained in $M$.

Note that if $M=\Ann_{E_S} J$, $M^{\star^e}=\Ann_{E_S} J^{\star^e u}$.
\end{defi}

\begin{thm}\label{Theorem: Formula for tight closure}
Suppose that the $S$-algebra $\mathcal{F}(E_S)$ is generated by one element corresponding to $u\in (I^{[p]} : I)$.
Let $N$ be a $S[T; f]$-submodule of $E_S$ and let $Z=\Ann_{E_S} J$ be the stable value of
the descending chain $\{ ST^j N \}_{j\geq 0}$.
Assume further that the image of $c\in R$ in $S$ is a weak $p^{\eta}$-test element
for the pair $(N,E_S)$ and that $\eta$ was chosen so large that $Z=ST^\eta N$.
We have
$$N^*_{E_S}= \Ann_{E_S} \left((cJ+I)^{\star^\eta u}\right)^{\sharp u}  .$$
\end{thm}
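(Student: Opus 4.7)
The plan is to combine Propositions \ref{Proposition: tight closure as intersection of special tight closure} and \ref{Proposition: special tight closure} with Theorem \ref{Theorem: sharp}, using the singly generated hypothesis to strip the tight closure condition of its dependence on arbitrary Frobenius maps. Since $\mathcal{F}(E_S)=S[T]$ as an $S$-algebra, every $\phi\in\mathcal{F}^e(E_S)$ is of the form $sT^e$ for some $s\in S$; Proposition~\ref{Proposition: tight closure as intersection of special tight closure}, together with the chain stability $ST^e N=Z$ for $e\geq\eta$, therefore collapses the defining condition to
$$a\in N^*_{E_S}\iff cT^e a\in Z \text{ for every } e\geq\eta.$$

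Next, endow $E_S$ with its induced $S[\Theta;f^\eta]$-structure via $\Theta=T^\eta$, corresponding to $u^{\nu_\eta}\in(I^{[p^\eta]}:I)$. Writing $L=(cJ+I)^{\star^\eta u}$, the submodule $Y:=\Ann_{E_S}L$ is the largest $S[\Theta;f^\eta]$-submodule of $E_S$ contained in $\Ann_{E_S}(cJ)$. Choosing $j_0$ with $\Theta^{j_0}\Nil(E_S/Y)=0$, Proposition~\ref{Proposition: special tight closure} identifies the set $\{m\in E_S\mid c\Theta^j m\in Z\text{ for all }j\geq j_0\}$ with the preimage in $E_S$ of $\Nil(E_S/Y)$, which the $S[\Theta;f^\eta]$-analogue of Theorem~\ref{Theorem: sharp} rewrites as $\Ann_{E_S}(L^{\sharp u})$. (The identity $L^{\sharp u^{\nu_\eta}}=L^{\sharp u}$ is immediate because the decreasing chain $\{I_e(u^{\nu_e}L)\}_{e\geq 0}$ has cofinal subsequence over the multiples of $\eta$.)

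It remains to match the set from the first step with $\Ann_{E_S}(L^{\sharp u})$. The inclusion $N^*_{E_S}\subseteq\Ann_{E_S}(L^{\sharp u})$ is immediate on specializing $e$ to multiples of $\eta$. The reverse inclusion is the principal obstacle: from $a\in\Ann_{E_S}(L^{\sharp u})$ one obtains $\Theta^{j_0}a\in Y$ and hence $cJ\cdot\Theta^{j_0}a=0$, but one must upgrade this to $cT^e a\in Z$ for every $e\geq\eta$, not just for $e$ a multiple of $\eta$. The route I would take is to verify that $L^{\sharp u}$ is itself an $E_S$-ideal for the full $S[T;f]$-structure (an argument parallel to the Lemma preceding Theorem~\ref{Theorem: sharp}) and that it contains $cJ$; together these make $\Ann_{E_S}(L^{\sharp u})$ genuinely $T$-stable and force $T^e a\in\Ann_{E_S}(cJ)$ for all $e\geq 0$, closing the argument.
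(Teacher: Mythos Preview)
Your setup and the forward inclusion are fine and match the paper. The gap is in the reverse inclusion, and specifically in the claim that $L^{\sharp u}$ contains $cJ$. This is false in general: $L^{\sharp u}$ is by construction \emph{contained} in $L$ (the chain $\{I_e(u^{\nu_e}L)\}$ starts at $L$ and decreases), and there is no mechanism forcing it to stay above $cJ+I$. Concretely, $\Ann_{E_S}(L^{\sharp u})$ is the $\Theta$-saturation of $Y=\Ann_{E_S}L$ inside $E_S$; asking that this saturation remain inside $\Ann_{E_S}(cJ)$ is exactly asking that $\Ann_{E_S}(cJ)$ be $\Theta$-saturated, which it has no reason to be since $cJ$ is merely an ideal, not an $E_S$-ideal. (Your other ingredient, that $L^{\sharp u}$ be an $E_S$-ideal for the full $S[T;f]$-structure, is also not ``parallel'' to the cited Lemma: that Lemma stays within a single Frobenius level, whereas you need to descend from the $f^\eta$-structure to the $f$-structure. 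The same issue infects your parenthetical $L^{\sharp u^{\nu_\eta}}=L^{\sharp u}$: cofinality of the subsequence gives equality only once you know the full sequence $\{I_e(u^{\nu_e}L)\}_{e\geq 0}$ is decreasing, which needs $L$ to be an $E_S$-ideal for the $T$-structure, not merely for $\Theta$.)

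The paper closes the gap differently, by a test-element trick rather than by structural properties of $L^{\sharp u}$. If $c\otimes a\in N^{[p^{j\eta}]}$, then tensoring on the left by $ST^k$ gives $c^{p^k}\otimes a\in N^{[p^{j\eta+k}]}$; hence knowing the condition at all multiples of $\eta$ above $j_0\eta$ forces $c^{p^{\eta-1}}\otimes a\in N^{[p^e]}$ for every $e\geq j_0\eta$. Since $c^{p^{\eta-1}}$ is still outside every minimal prime, this places $a$ in $N^*_{E_S}$ directly from the definition of tight closure. In other words, one replaces $c$ by $c^{p^{\eta-1}}$ to fill in the exponents between consecutive multiples of $\eta$, rather than trying to show that $\Ann_{E_S}(L^{\sharp u})$ already sits inside $\Ann_{E_S}(cJ)$.
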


\begin{proof}
Fix the $S[T; f]$-module structure on $E_S$ corresponding to $u$.
In view of Proposition \ref{Proposition: tight closure as intersection of special tight closure}
and of the fact that for all $e\geq \eta$, $\mathcal{F}^e(E_S)=ST^e$,
$$N^*_{E_S}= \cap_{e\geq \eta} \{ m\in E_S \,|\, c T^e m\in S T^e N \}$$
for all $\eta\geq \eta_0$.

Notice that, if for some $a\in E_S$ and positive integer $j$
the element $c\otimes a\in ST^{j \eta}\otimes E_S$ is in $N^{[p^{j \eta}]}$, then
after tensoring on the left with $ST^k$ for $1\leq k\leq \eta -1$ and using the isomorphism
$ST^k \otimes ST^{j \eta}\cong ST^{k+j \eta}$, we obtain
$c^{p^k} \otimes a\in ST^{j \eta+k} \otimes E_S$ is in $N^{[p^{j \eta+k}]}$ for all $1\leq k\leq \eta -1$ and hence
$c^{p^{\eta-1}} \otimes a\in ST^{j \eta+k} \otimes E_S$ is in $N^{[p^{j \eta}+k]}$ for all $1\leq k\leq \eta -1$.
For any positive integer $j_0$, we may replace $c$ with $c^{p^{\eta-1}}$ as a $p^{j_0 \eta}$ weak test element and
deduce that
$$a\in N^*_{E_S}= \cap_{j\geq j_0} \{ m\in E_S \,|\, c T^{j \eta} m\in S T^{j \eta} N \}. $$

Write $\Theta=T^\eta$ and let $L=(cJ+I)^{\star^\eta u}$.
Note that $\Ann_{E_S} L$ is the largest $S[\Theta; f^\eta]$-submodule of $E_S$ contained in $\Ann_{E_S} cJ$.
Pick any positive integer $j_0$ such that $\Theta^{j_0} \Nil(E_S/\Ann_{E_S} L)=0$.

An application of Proposition \ref{Proposition: special tight closure}
shows that
$$N^*_{E_S}= \cap_{j\geq j_0} \{ m\in E_S \,|\, c \Theta^j m \in  Z \}$$
is the pre-image of $\Nil(E_S/\Ann_{E_S} L)$ in $E_S$,
and this is precisely
$\Ann_{E_S} L^{\sharp u}$.
\end{proof}

The theorem above can be easily translated into an algorithm.
However, the practicality of this algorithm will be limited by the fact that one of its inputs
is an element $c\in R$ whose image in $S$ is a weak $p^{\eta}$-test element 
for the pair $(N,E_S)$.
Although there are currently no published methods for producing such a test element,
there are preliminary results by Mel Hochster showing that these can be chosen to be suitable powers of elements in
the singular locus of $S$, just as with regular test elements.

\bigskip
One instance when the $S$-algebra $\mathcal{F}(E_S)$ is generated by one element is when
$S$ is Gorenstein, or more generally, when  $S$ is quasi-Gorenstein (i.e., $E_S\cong \HH^{\dim S}_{\mathfrak{m} S} (S)$)
and satisfies Serre's $S_2$ condition. This is the content of Example 3.6 in \cite{Lyubeznik-Smith}.
However, the class of quotients $S$ of $R$ for which the $S$-algebra $\mathcal{F}(E_S)$ is generated by one element is
strictly larger than this.

\begin{ex}
Consider the power series ring $R=\mathbb{K}[\![ a,b,c ]\!]$, where $\mathbb{K}$ is a field of prime characteristic $p$,
its ideal  $I=(ab-bc,bc-b^2, ac-bc)R=(a,b)R \cap (c,b)R \cap (a-c, b-c)R$ and the one dimensional quotient $S=R/I$.
We have a minimal resolution
$$
0 \rightarrow R^2 \xrightarrow[]{\left( \begin{array}{rr} b & c\\ -a & -c\\ -b & -b\\ \end{array} \right)}
R^3 \xrightarrow[]{\left( \begin{array}{rrr} ab-bc & bc-b^2 & ac-bc \end{array}  \right)} R \rightarrow S \rightarrow 0
$$
which shows that $S$ is Cohen-Macaulay of type 2, hence non-Gorenstein and not quasi-Gorenstein.
For all primes $p\geq 5$,
$$ b^{p-1} (b-c)^{p-1} (a-b)^{p-1} \in (I^{[p]} : I) $$
and a calculation with Macaulay2  shows that this element
generates the $S$-module $(I^{[p]} : I)/I^{[p]}$ for all $5\leq p \leq 97$.

Fix now $p=5$. We compute $I_1(uR+I)=R$, so $S=R/I$ is $F$-injective.
We choose, using the notation of Theorem \ref{Theorem: Formula for tight closure}, $N=0$, hence $J=R$, and we \emph{assume} that
the test element
$c_0=( a^2+2ab+2b^2-2ac-bc-c^2 )^3$ can be used as a test element for the pair $(0, E_S)$.
We calculate
$L=(c_0R+I)^{\star^1 u}=(a, b, c)R$ and  $I_1(u L)+I=I_2(u^{1+2} L)+I=(a, b, c)R$ and so
we can give $0^*_{E_S}$ explicitly as the annihilator in $E_S$ of
$((a, b, c)R)^{\sharp u}=(a, b, c)R$.
\end{ex}

\begin{cor}\label{Corollary: The test ideal of quasi-Gorenstein rings}
Assume that $S$ is  equidimensional and quasi-Gorenstein and that it satisfies Serre's $S_2$ condition.
Let $c\in R$ be such that its image in $S$ is a test element for the pair $(0,E_S)$.
The test ideal of $S$ is
$$\Ann_S 0^*_{E_S}= \left((Rc+I)^{\star u}\right)^{\sharp u}S  .$$
\end{cor}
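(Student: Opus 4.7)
The plan is to deduce the corollary as a direct specialization of Theorem \ref{Theorem: Formula for tight closure} to the case $N=0$, combined with Matlis duality.

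First I would verify the hypothesis of Theorem \ref{Theorem: Formula for tight closure}, namely that the $S$-algebra $\mathcal{F}(E_S)$ is generated by one element. This is exactly the content of the discussion immediately preceding the corollary (Example 3.6 in \cite{Lyubeznik-Smith} handles the quasi-Gorenstein $S_2$ case, which is where the hypotheses of equidimensionality, quasi-Gorensteinness, and Serre's condition $S_2$ come in). Under this hypothesis the generator can be taken to correspond to some $u\in(I^{[p]}:I)$, and this $u$ is the one appearing in the statement.

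Next I would specialize the theorem by taking $N=0$. Then the descending chain $\{S T^j N\}_{j\geq 0}$ is identically zero, so its stable value is $Z=0=\Ann_{E_S} R$, which forces $J=R$. Moreover $c$ is (a test element, hence) a weak $p^\eta$-test element for every $\eta\geq 0$, and the equality $Z=ST^\eta N$ holds trivially for every $\eta\geq 1$. I would therefore apply the theorem with $\eta=1$, in which case $\star^\eta u=\star u$ (the operation of Section \ref{Section: Weak parameter test ideals}/\cite{Katzman}). The formula of Theorem \ref{Theorem: Formula for tight closure} then reads
\[
0^*_{E_S}\;=\;\Ann_{E_S}\Bigl(\bigl((Rc+I)^{\star u}\bigr)^{\sharp u}\Bigr).
\]

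Finally, to pass from $0^*_{E_S}$ to its annihilator in $S$, I would invoke Matlis duality over the complete local ring $R$: for any ideal $L\subseteq R$ containing $I$ one has $\Ann_R\Ann_{E_R}L=L$, hence $\Ann_S\Ann_{E_S}L$ equals the image $LS$ of $L$ in $S$. Applying this to $L=((Rc+I)^{\star u})^{\sharp u}$ (which contains $I$, as $I\subseteq (Rc+I)\subseteq (Rc+I)^{\star u}\subseteq L$) yields
\[
\Ann_S 0^*_{E_S}\;=\;\bigl((Rc+I)^{\star u}\bigr)^{\sharp u}\,S,
\]
which is the claim. There is no real obstacle here; the whole argument is essentially bookkeeping once Theorem \ref{Theorem: Formula for tight closure} is in hand, and the only thing to be careful about is that the $N=0$ specialization legitimately allows us to choose $\eta=1$ (so that $\star^\eta u$ reduces to the known $\star u$) and that the Matlis dualization step is applied to an ideal that contains $I$.
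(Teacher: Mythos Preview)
Your computation of $\Ann_S 0^*_{E_S}$ via Theorem \ref{Theorem: Formula for tight closure} with $N=0$, $J=R$, $\eta=1$ is correct and matches the paper's approach. However, you have not proved the corollary: its statement asserts that the \emph{test ideal} of $S$ equals $\Ann_S 0^*_{E_S}$, and you never address this. In general the test ideal is the annihilator of the \emph{finitistic} tight closure $0^{*\mathrm{fg}}_{E_S}$, not of $0^*_{E_S}$; these are only conjecturally equal (cf.\ the introduction). The paper explicitly invokes the quasi-Gorenstein and equidimensional hypotheses, together with results from \cite{Hochster-Huneke-0} and \cite{Smith}, to identify $0^{*\mathrm{fg}}_{E_S}$ with $0^*_{E_S}$ and hence the test ideal with $\Ann_S 0^*_{E_S}$.

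This also means your bookkeeping of the hypotheses is off: quasi-Gorenstein together with $S_2$ gives that $\mathcal{F}(E_S)$ is singly generated (Example 3.6 in \cite{Lyubeznik-Smith}), while quasi-Gorenstein together with equidimensionality is what collapses finitistic tight closure to tight closure. Equidimensionality is not needed for the former, contrary to what you wrote.
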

\begin{proof}
The fact that $S$ is quasi-Gorenstein and equidimensional implies that
the finitistic tight closure of $0$ in $E_S$
coincides with $0^{*}_{E_S}$ and hence the test ideal of $S$ is $\Ann_S 0^*_{E_S}$
(cf.~section 8 in \cite{Hochster-Huneke-0} and Proposition 3.3 in \cite{Smith}).
The fact that $S$ satisfies Serre's $S_2$ condition implies that the $S$-algebra
$\mathcal{F}(E_S)$ is generated by one element.
Now the result follows from Theorem \ref{Theorem: Formula for tight closure} with $J=R$.
\end{proof}

\affiliationone{
Mordechai Katzman\\
Department of Pure Mathematics\\
University of Sheffield\\
Hicks Building\\
Sheffield S3 7RH\\
United Kingdom}
\email{M.Katzman@sheffield.ac.uk}


\begin{thebibliography}{EH}
\bibitem[B]{Blickle}
M.~Blickle.
\emph{The intersection homology D--module in finite characteristic.}
PhD thesis, University of Michigan, 2001.
ArXiV math.AG/0110244.

\bibitem[BS]{Brodmann-Sharp}
M.~P.~Brodmann and  R.~Y.~Sharp.
\emph{Local cohomology: an algebraic introduction with geometric applications.}
Cambridge Studies in Advanced Mathematics, {\bf 60}, Cambridge University Press, Cambridge, 1998.


\bibitem[HH]{Hochster-Huneke-0}
M.~Hochster and C.~Huneke.
\emph{Tight closure, invariant theory, and the Brian\c{c}on-Skoda theorem.}
Journal of the AMS {\bf 3} (1990), no. 1, 31--116.


\bibitem[H]{Huneke}
C.~Huneke.
\emph{Tight closure and its applications.}
CBMS Regional Conference Series in Mathematics, 88.
American Mathematical Society, Providence, RI, 1996.

\bibitem[K]{Katzman}
M.~Katzman.
\emph{Parameter test ideals of Cohen Macaulay rings.}
Compositio Mathematica, {\bf 144} (2008), pp.~933--948.

\bibitem[L]{Lyubeznik}
G.~Lyubeznik.
\emph{$F$-modules: applications to local cohomology and $D$-modules in characteristic $p>0$.}
J.~Reine Angew.~Math.~{\bf 491} (1997), pp.~65--130.

\bibitem[LS]{Lyubeznik-Smith}
G.~Lyubeznik and K.~E.~Smith.
\emph{On the commutation of the test ideal with localization and completion.}
Transactions of the AMS  {\bf 353} (2001), no.~8, pp.~3149--3180.


\bibitem[GS]{Macaulay2} D.~Grayson and M.~Stillman.
\emph{Macaulay 2} --
a software system for algebraic geometry and commutative algebra,
available at {\tt http://www.math.uiuc.edu/Macaulay2}.

\bibitem[S1]{Sharp}
R.~Y.~Sharp.
\emph{Graded annihilators of modules over the Frobenius skew polynomial ring, and tight closure.}
Transactions of the AMS  {\bf 359}  (2007),  no.~9, pp.~4237--4258

\bibitem[S2]{Sharp2}
R.~Y.~Sharp.
\emph{Frobenius maps on injective envelopes over $F$-pure local rings.}
Preprint.


\bibitem[Sm1]{Smith}
K.~E.~Smith.
\emph{Tight closure of parameter ideals.}
Invent. Math. {\bf 115} (1994), no.~1, pp.~41--60.


\bibitem[Sm2]{Smith2}
K.~E.~Smith.
\emph{Test ideals in local rings.}
Transactions of the AMS {\bf 347} (1995), no.~9, pp.~3453--3472.




\end{thebibliography}
\end{document}